\newtheorem{assumption}{Assumption}[section]
\newcommand{\no}{\noindent}
\newcommand{\be}{\begin{equation}}
\newcommand{\ee}{\end{equation}}
\newcommand{\bea}{\begin{eqnarray}}
\newcommand{\bes}{\begin{subequations}}
\newcommand{\ees}{\end{subequations}}
\newcommand{\bgt}{\begin{gather}}
\newcommand{\egt}{\begin{gather}}
\newcommand{\eea}{\end{eqnarray}}
\newcommand{\beaa}{\begin{eqnarray*}}
\newcommand{\eeaa}{\end{eqnarray*}}
\newcommand{\NN}{{\mathbb N}}
\newcommand{\EE}{{\mathbb E}}
\newcommand{\RR}{{\mathbb R}}
\renewcommand{\cal}{\mathcal}
\def \d{\delta}
\def \g {\gamma}
\def \l {\lambda}
\def \m {\mu}
\def \p {\phi}
\def \t {\tau}
\def \ps {\psi}
\def \D{\Delta}
\def\Dc{{\cal D}}
\def\Fc{{\cal F}}
\def\Hc{{\cal H}}
\def\Lc{{\cal L}}
\def\Pc{{\cal P}}
\def\Tc{{\cal T}}
\def \F{\mathbb{F}}
\def \E{\mathbb{E}}
\def \Eb{\mathbb{E}}
\def \H{\mathbb{H}}
\def\P{\mathbb{P}}
\def \Rb{\mathbb{R}}
\def \R{\mathbb{R}}
\def \bu {\boldsymbol{\mu}}
\def\reff#1{{\rm(\ref{#1})}}
\def \t {\tau}
\newcommand{\rmi}{{\rm (i)$\>\>$}}
\newcommand{\rmii}{{\rm (ii)$\>\>$}}
\newcommand{\rmiii}{{\rm (iii)$\>\>$}}
\begin{document}

\title{Some Results on Skorokhod Embedding and Robust Hedging with Local Time}

\author{Julien Claisse \and Gaoyue Guo \and  Pierre Henry-Labord\`ere}

\institute{Julien Claisse,  Corresponding author \at
             \'Ecole Polytechnique \\
              Palaiseau, France\\
              claisse@cmap.polytechnique.fr 
           \and
           Gaoyue Guo  \at
              University of Oxford \\
              Oxford, United Kingdom\\
              guo.gaoyue@gmail.com
           \and 
           Pierre Henry-Labord\`ere \at
           	  Soci\'et\'e G\'en\'erale \\
           	  Paris, France\\
           	  pierre.henry-labordere@sgcib.com
}

\date{Received: date / Accepted: date}

\maketitle

\begin{abstract}
	In this paper, we provide some results on Skorokhod embedding with local time and its applications to the robust hedging problem in finance. 
	First we investigate the robust hedging of options depending on the local time by using the recently introduced stochastic control approach, in order to identify the optimal hedging strategies, as well as the market models that realize the extremal no-arbitrage prices. 
	As a by-product, the optimality of Vallois' Skorokhod embeddings is recovered. 
	In addition, under appropriate conditions, we derive a new solution to the two-marginal Skorokhod embedding as a generalization of the Vallois solution. It turns out from our analysis that one needs to relax the monotonicity assumption on the embedding functions in order to embed a larger class of marginal distributions.
	Finally, in a full-marginal setting where the stopping times given by Vallois are well-ordered, we construct a remarkable Markov martingale which provides a new example of fake Brownian motion.
\end{abstract}

\keywords{Skorokhod embedding \and Model-free pricing \and Robust hedging \and Local time \and Fake Brownian motion}

\subclass{60G40 \and 60G44 \and 91G20 \and 91G80}

\section{Introduction}

\no The Skorokhod embedding problem (SEP for short) consists in choosing a stopping time in order to represent a given probability on the real line as the distribution of a stopped Brownian motion. First formulated and solved by Skorokhod~\cite{skorokhod-61}, this problem has given rise to important literature and a large number of solutions have been provided. We refer the reader to the survey paper by Obl{\'o}j~\cite{obloj-04} for a detailed description of the known solutions. 
Among them, the solution provided by Vallois~\cite{vallois-83} is based on the local time (at zero) of the Brownian motion.
As proved later by Vallois~\cite{vallois-92}, it has the property to maximize the expectation of any convex function of the local time among all solutions to the SEP. Similarly, many solutions to the SEP satisfy such an optimality property. This feature has given rise to important applications in finance as it allows to solve the so-called robust hedging problem which we describe below.

\no The classical pricing paradigm of contingent claims consists in postulating first a model, i.e., a  risk-neutral measure, under which forward prices are required to be martingales according to the no-arbitrage framework. 
Then the price of any European derivative is obtained as the expectation of its discounted payoff under this measure.   Additionally, the model may be required to be calibrated to the market prices of liquid options such as call options that are available for hedging the exotic derivative under consideration.
This could lead to a wide range of prices when evaluated using different models calibrated to the same market data. To account for the model uncertainty, it is natural to consider simultaneously a family of (non-dominated) market models. Then the seller (resp. buyer) aims to construct a portfolio to super-replicate (resp. sub-replicate) the derivative under any market scenario by trading dynamically in the underlying assets and statically in a range of Vanilla options. This lead to an interval of no-arbitrage prices whose bounds are given by  the minimal super-replication and the maximal sub-replication prices. The robust hedging problem is to compute these bounds as well as the corresponding trading strategies.

\no  We consider the classical framework where all European call options having the same maturity as the exotic derivative are available for trading. 
As observed by Breeden and Litzenberger~\cite{breeden-litzenberger-78}, the marginal distribution of the underlying price process at maturity is uniquely determined by the market prices of these call options. In this setting, the robust hedging problem is classically approached by means of the SEP. This approach relies on the fact that every continuous martingale can be considered as a time-changed Brownian motion. Thus, for payoffs invariant under time change, the problem can be formulated in terms of finding a solution to the SEP which optimizes the criterion given by the payoff. This approach was initiated by Hobson who considered the robust superhedging problem for lookback options in his seminal paper~\cite{hobson-98}. 
Since then, the SEP has received substantial attention from the mathematical finance community and this approach was subsequently exploited in Brown, Hobson and Rogers~\cite{brown-hobson-rogers-01-2} for barrier options, in Cox, Hobson and Obl{\'o}j~\cite{cox-hobson-obloj-08} for options on local time, in Cox and Obl{\'o}j~\cite{cox-obloj-11} for double-barrier options and in Cox and Wang~\cite{cox-wang-13} for options on variance. 
One of the key steps in the SEP approach is to guess the form of the optimal hedging strategies from a well-chosen pathwise inequality.

\no Recently, a new approach to study the robust hedging problem was developed by Galichon, Henry-Labord\`ere and Touzi~\cite{galichon-henry-touzi-14}. It is based on a dual representation of the robust hedging problem, which can be addressed by means of the stochastic control theory. It appears that the stochastic control approach is remarkably devised to provide candidates for the optimal hedging strategies. Once postulated, the hedging inequality can be verified independently. 
This is illustrated by the study of Henry-Labord\`ere et al.~\cite{henry-al-15}, where they solve the robust hedging problem for lookback options when a finite number of marginals are known. To the best of our knowledge, 
it is the first paper to address the multi-marginal problem, at the exception of Brown, Hobson and Rogers~\cite{brown-hobson-rogers-01-1} and Hobson and Neuberger~\cite{hobson-neuberger-12} who considered the two-marginal case. In addition, it led to the first (nontrivial) solution of the multi-marginal SEP, which can be seen as a generalization of the Az\'ema-Yor solution, see Obl{\'o}j and Spoida~\cite{obloj-spoida-13}.

\no In this paper, we aim to collect a number of new results regarding Skorokhod embedding with local time and its applications. First we are concerned with the robust hedging problem for options written on the local time of the underlying price process. Such derivatives appear naturally in finance when considering payoffs depending on the portfolio value of an at-the-money call option delta hedged with the naive strategy holding one unit of the risky asset if in the money, else nothing, as expressed mathematically by It\^o-Tanaka's formula.
By using the stochastic control approach, we first recover the results  on the robust superhedging problem obtained by Cox, Hobson and Obl{\'o}j~\cite{cox-hobson-obloj-08}, i.e., we identify optimal superhedging strategies and the upperbound of the no-arbitrage interval. Then we derive the corresponding results for the robust subhedging problem. The last result is new to the literature.  

\no In addition, we provide a new solution to the two-marginal SEP as a generalization of the Vallois solution. To this end, we have to make rather strong assumptions on the marginals that we aim to embed. However it is remarkable that these assumptions are to a certain extent necessary to derive a solution without relaxing the monotonicity assumption on the embedding functions. To the best of our knowledge, this is one of the first solution to the multi-marginal SEP to appear in the literature. In addition to the purely theoretical interest of this result, it is also a first step toward a solution to the robust hedging problem in the two-marginal setting, i.e., when the investor can trade on Vanilla option with an intermediate maturity. 

\no Finally, we consider a special full-marginal setting when the stopping times given by Vallois are well-ordered. In the spirit of Madan and Yor~\cite{madan-yor-02}, we construct a remarkable Markov martingale via the family of Vallois' embeddings and compute its generator. In particular, it provides a new example of fake Brownian motion. From a financial viewpoint, our result characterizes the arbitrage-free model calibrated to the full implied volatility surface, which attains the upper bound of the no-arbitrage interval when the investor can trade in Vanilla options maturing at any time.

\no The paper is organized as follows. We briefly introduce in Section 2 the framework of robust hedging of exotic derivatives and its relation with the martingale optimal transport problem and the SEP. In Section 3, using the stochastic control approach, we provide explicit formulas for the bounds of the no-arbitrage interval and the optimal hedging strategies for the robust hedging problem. Then we introduce our new solution to the two-marginal SEP in Section 4. We illustrate this result by studying a numerical example.  Finally, we consider in Section 5 the full-marginal setting and construct our new example of fake Brownian motion.

\section{Formulation of the Robust Hedging Problem}
\label{sec:robust}

\subsection{Modeling the Model Uncertainty}

\no We consider a financial market consisting of one risky asset, which may be traded at any time $0\le t \le T$, where $T$ denotes some fixed maturity. We pursue a robust approach and do not specify the dynamics of the underlying price process. Namely, given an initial value  $X_0\in\R$, we introduce the set of continuous paths $\Omega:= \{\omega\in C([0,T], \Rb): \omega(0)=X_0\}$ as the canonical space equipped with the uniform norm $\|\omega\|_\infty :=
\sup_{0\le t\le T}|\omega(t)|$. Let $X=(X_t)_{0\leq t\le T}$ be the canonical process
and $\F= (\Fc_t)_{0\le t\le T}$ be the natural filtration, i.e., $X_t(\omega):=\omega(t)$ and $\Fc_t:=\sigma(X_s, s\le t)$. In this setting, $X$
stands for the underlying price process with initial value $X_0$.
In order to account for model uncertainty, we introduce the set $\Pc$ of all probability measures $\P$ on $(\Omega,\Fc_T)$ such that $X$ is a $\P-$martingale. The restriction to martingale measures is motivated by the classical no-arbitrage framework in mathematical finance. For the sake of generality, we do not restrict to $X_t \in \RR_+$ but consider the general case $X_t \in \RR$.

\no In addition, all call options with maturity $T$ are assumed to be available for trading.
A model $\P\in\Pc$ is said to be calibrated to the market if it satisfies $\E^\P[(X_{T}-K)^+]=c(K)$ for all $K\in \R$, 
where $c(K)$ denotes the market price of a $T-$call option with strike $K$.
For such a model, as observed by Breeden and Litzenberger~\cite{breeden-litzenberger-78}, it follows  by direct differentiation that
 \begin{eqnarray*}
 \P(X_{T}>K)=-c'(K+)=:\mu{\left(]K,\infty[\right)}. 
 \end{eqnarray*}
Hence the marginal distribution of $X_{T}$ is uniquely specified by the market prices. Let $\Pc^{\mu}$ be the set of calibrated market models, i.e.,
 \begin{eqnarray*} 
 \Pc^{\mu}
 &:=&
 \left\{\P\in\Pc:~ X_{T}~\stackrel{\P}{\sim}~\mu\right\}.
 \end{eqnarray*}
 Clearly, $\Pc^{\mu}\neq \emptyset$ if and only if $\mu$ is centered at $X_0$ or, equivalently,
\beaa
  \int_{\R} |x| \,d\mu(x)<\infty &~\mbox{and}~&
\int_{\R} x \,d\mu(x)=X_0.
\eeaa

\subsection{Semi-Static Hedging Portfolios}
\label{sect:semistatic}

We denote by $\H^0$ the collection of all $\F-$predictable processes and, for every $\P\in\Pc$,  
 \begin{eqnarray*}
 \H^2(\P)
 &:=&
 \Big\{\Delta=(\Delta_t)_{0\le t\le T}\in\H^0:\, \int_0^T |\Delta_t|^2 \,d\langle X\rangle_t < \infty,
                     ~\P-\mbox{a.s.}
 \Big\}.
 \end{eqnarray*}
A dynamic trading strategy is defined by a process $\Delta \in \H^2:=\cap_{\P\in\Pc} \H^2(\P)$, 
where $\Delta_t$ corresponds to the number of shares of the underlying asset held by the investor at time $t$. 
Under the self-financing condition, the portfolio value process of initial wealth $Y_0$ induced by a dynamic trading strategy $\D$ is given by
\footnote{
Both the quadratic variation and the stochastic integral depend \textit{a priori} on the probability measure under consideration. However, under the Continuum Hypothesis, it follows by Nutz~\cite{nutz-12} that they can be universally defined.
}
 \begin{eqnarray*}\label{XH}
 Y_t^\Delta
 &:=&
 Y_0 + \int_0^t \Delta_s \,dX_s, \quad \mbox{for all } t\in[0,T],~
 \P-\mbox{a.s. for all } \P\in\Pc.
 \end{eqnarray*}

 \no In addition to the dynamic trading on the underlying security, we assume that the investor can take static positions in $T-$call options for all strikes. Consequently, up to integrability, the European derivative defined by the payoff $H(X_T)$, which can be statically replicated by $T$-call options in view of the celebrated Carr-Madan formula, has an unambiguous market price
 \begin{eqnarray*}
 \mu(H) 
 &:=& 
 \int_\R H(x) \, d\mu (x).
 \end{eqnarray*}
 The set of Vanilla payoffs which may be used by the trader has naturally the following form
  \begin{eqnarray*}\label{Lambdan}
  L^1(\mu) 
  &:=& 
  \Big\{H:\R\to\R~ measurable~ s.t.~  \m\big(|H|\big)<\infty \Big\}.
  \end{eqnarray*}
  A pair $(\Delta, H)\in \H^2\times L^1(\mu)$ is called a semi-static hedging strategy, and induces the final value of the self-financing portfolio:
 \begin{eqnarray*}\label{barX}
 {Y}^{\Delta,H}_T
 &:=&
 Y^\Delta_T - \mu(H) + H(X_T),\quad \P -\mbox{a.s. for all } \P\in\Pc,
 \end{eqnarray*}
indicating that the investor has the possibility of buying at initial time any derivative with payoff $H(X_T)$ for the price $\mu(H)$. 
 
\subsection{Robust Hedging and Martingale Optimal Transport}
\label{sec:robusthedging}

Given a derivative of payoff $\xi=\xi(X)$ $\Fc_T$-measurable, we consider  the corresponding problem of robust (semi-static) hedging. The investor can trade as discussed in the previous section. However we need to impose a further admissibility condition to rule out doubling strategies. 
Let $\overline{\Hc}^{\mu}$ (resp. $\underline{\Hc}^{\mu}$) consist of all processes $\Delta \in \H^2$ whose induced portfolio value process $Y^\Delta$ is a $\P-$supermartingale (resp. $\P-$submartingale) for all $\P\in\Pc^{\mu}$. 
The robust superhedging and subhedging costs are then defined by
\beaa
 U^{\mu}(\xi)
 &:=&
 \inf\left\{ Y_0: \exists (\Delta, H)\in \overline{\Hc}^{\mu}\times L^1(\mu) \mbox{ s.t. } 
                  {Y}^{\Delta, H}_T\ge \xi,~
                  \P -\mbox{a.s. }\forall\, \P\in\Pc
     \right\},\\
 D^{\mu}(\xi)
 &:=&
 \sup\left\{ Y_0:  \exists (\Delta, H)\in \underline{\Hc}^{\mu}\times L^1(\mu) \mbox{ s.t. } 
                  {Y}^{\Delta, H}_T\le \xi,~
                  \P -\mbox{a.s. }\forall\,  \P\in\Pc
     \right\}.
\eeaa
Selling $\xi$ at a price higher than $U^{\mu}(\xi)$ --- or buying it at a price lower than $D^{\mu}(\xi)$ --- the trader could set up a portfolio with a negative initial cost and a non-negative payoff under any market scenario leading to a strong (model-independent) arbitrage opportunity. 

\no By taking expectation in the hedging inequalities under $\P\in\Pc^{\mu}$,
we obtain the usual pricing--hedging inequalities:
\beaa
  U^{\mu}(\xi)  \geq \sup_{\P\in\Pc^{\mu}} \EE^{\P}[\xi] =: P^{\mu}(\xi) &~\mbox{and}~& D^{\mu}(\xi)  \leq \inf_{\P\in\Pc^{\mu}} \EE^{\P}[\xi] =: I^{\mu}(\xi), \label{weakdual}
\eeaa
where $P^{\mu}(\xi)$ and $I^{\mu}(\xi)$ are continuous-time martingale optimal transport problems. They consist in maximizing or minimizing the criterion defined by the payoff so as to transport the Dirac measure at $X_0$ to the given distribution $\mu$ by means of a continuous-time process restricted to be a martingale. 

\no The study of martingale optimal transportation was recently initiated by Beiglb\"ock, Henry-Labord\`ere and Penkner~\cite{beigblock-al-13} in discrete-time and by Galichon, Henry-Labord\`ere and Touzi~\cite{galichon-henry-touzi-14} in continuous-time. By analogy with the classical optimal transportation theory, one expects to establish a sort of Kantorovitch duality and to characterize the optimizers for both the primal and dual problems. The dual formulation has a natural financial interpretation in terms of robust hedging, which explains the keen interest of the mathematical finance community in martingale optimal transport. When the payoff is invariant under time-change, the SEP and the stochastic control approach turn out to be powerful tools to derive the duality and compute explicitly the optimizers as illustrated in this paper. 
For duality results with more general payoffs, we refer to the recent studies by Dolinsky and Soner~\cite{dolinsky-soner-14}, Hou and Obl{\'o}j~\cite{hou-obloj-15} and Guo, Tan and Touzi~\cite{guo-tan-touzi-15-2}.

\subsection{Robust Hedging of Options on Local Time}

\no In this paper, we focus on the robust hedging problem of options whose payoff is given by
\beaa
 \xi &:=& F(L_T) \mbox{ with } F: \R_+\longrightarrow\R,
\eeaa
 where $L=(L_t)_{0\le t\le T}$ is the local time of $X$ at $X_0$. Below, under appropriate conditions, we will exhibit the optimizers for both the robust hedging and the martingale optimal transport problems, and show further that there is no duality gap, i.e., $U^{\mu}(F(L_T)) = P^{\mu}(F(L_T))$ and $D^{\mu}(F(L_T)) = I^{\mu}(F(L_T))$. 
 
 \no The payoff $F(L_T)$ can be interpreted as a payoff  depending on the portfolio value at maturity $T$ of an at-the-money call option delta hedged with the naive strategy holding one unit of the risky asset if in the money, else nothing, mathematically expressed by It\^o-Tanaka's formula:
\footnote{
In view of the pathwise construction of stochastic integrals in Nutz~\cite{nutz-12}, It\^o-Tanaka's formula implies that the local time can also be universally defined.  
}
 \beaa
   \frac{1}{2} L_T&=&(X_T-X_0)^+ - \int_0^T {\mathbf 1}_{\{X_t>X_0\}} dX_t.
 \eeaa

\no Since the local time is invariant under time-change,
\footnote{
Namely, given a family of stopping times $(\tau_t)_{t\geq 0}$ such that $t\mapsto \tau_t$ is continuous and increasing, we have $(L_{\tau_t})_{t\geq 0}=(\tilde{L}_t)_{t\geq 0}$ where $(\tilde{L}_t)_{t\geq 0}$ denote the local time at $X_0$ of the process $(X_{\tau_t})_{t\geq 0}$.
}
the martingale optimal transport problem can be formulated as an optimal stopping problem.  Indeed, it follows (formally) from the Dambis-Dubins-Schwartz theorem that
\bea\label{SEP}
 P^{\mu}(F(L_T)) = \sup_{\t\in\Tc^{\mu}}
 \E\left[F(L^B_{\tau})\right] &~\mbox{and}~&
 I^{\mu}(F(L_T)) = \inf_{\t\in\Tc^{\mu}}
 \E\left[F(L^B_{\tau})\right],
\eea
 where $(L^B_t)_{t\ge 0}$ is the local time at zero of a Brownian motion $(B_t)_{t\ge 0}$ and $\Tc^{\mu}$ is the collection of solutions to the SEP, i.e., stopping times $\tau$ such that
 \beaa
 B^{\tau} := (B_{t\wedge\t})_{t\geq0} \mbox{ is uniformly integrable} &\mbox{and}& B_{\tau}\sim\mu. 
 \eeaa
 See Galichon, Henry-Labord\`ere and Touzi~\cite{galichon-henry-touzi-14} and Guo, Tan and Touzi~\cite{guo-tan-touzi-15} for more details. 
Here, the formulation \reff{SEP} is directly searching for a solution to the SEP which maximizes or minimizes the criterion defined by the payoff.
 
\no It is well known that, if $F$ is a convex (or concave) function, the optimal solutions are of the form
\begin{eqnarray*}
 \t & :=& \inf \Big\{t>0:~ \ B_t\notin\big]\p_-(L^B_t), \p_+(L^B_t)\big[\Big\},
\end{eqnarray*}
for some monotone functions $\p_{\pm}:\R_+\to\R_{\pm}$.  
This result was first obtained in Vallois~\cite{vallois-92}, where he gives explicit constructions for the functions $\p_{\pm}$. It was then recovered by Cox, Hobson and Obl{\'o}j~\cite{cox-hobson-obloj-08} from a well-chosen pathwise inequality. 
More recently,
it was derived by Beiglb\"ock, Cox and Huesmann~\cite{beiglboeck-cox-huesmann-13} as a consequence of their monotonicity principle, which characterizes optimal solutions to the SEP by means of their geometrical support. 
However, the explicit computation of $(\p_+,\p_-)$ is not provided by this approach. See also Guo, Tan and Touzi~\cite{guo-tan-touzi-16} on the monotonicity principle.

\section{Solution of the Robust Hedging Problem}\label{sec:one-marginal}

\no Using the stochastic control approach, we reproduce in this section the results for the robust superhedging problem --- optimizers and duality --- obtained in Cox, Hobson and Obl{\'o}j~\cite{cox-hobson-obloj-08}. 
In addition, we provide the corresponding results for the robust subhedging problem. Throughout this section, we take $X_0=0$ for the sake of clarity and we work under the following assumption on the function $F$ and on the marginal $\mu$. In particular, in contrast with~\cite{cox-hobson-obloj-08}, we do not need to assume that $F$ is nondecreasing. 

\begin{assumption}\label{hyp:F}
 $F: \R_+\to\R$ is a Lipschitz convex function.
\end{assumption}

\begin{assumption}\label{hyp:mu} 
 $\mu$ is a centered probability distribution without mass at zero.
\end{assumption}

\subsection{Robust Superhedging Problem}\label{sec:onemarginal-sup}

In this section, under Assumptions \ref{hyp:F} and \ref{hyp:mu}, we provide the optimal hedging strategy for $U^{\mu}(F(L_T))$ as well as the optimal measure for $P^{\mu}(F(L_T))$ and we show that there is no duality gap, i.e., $P^{\mu}(F(L_T))=U^{\mu}(F(L_T))$. The key idea is that for any suitable pair of monotone functions $(\p_+,\p_-)$, we may construct a super-replication strategy $(\Delta, H)=(\Delta^{\phi_{\pm}}, H^{\phi_{\pm}})$. The optimality then results from taking the pair of functions given by Vallois that embeds the distribution $\mu$.

\begin{assumption}\label{hyp:onemarginal-phi}
$\p_+:]0,\infty[~ \longrightarrow~ ]0,\infty[$ (resp. $\p_-:]0,\infty[~ \longrightarrow~ ]-\infty,0[$) is right-continuous and nondecreasing (resp. nonincreasing) such that $\g(0+)=0$  and $\g(\infty)=\infty$ where, for all $l>0$,
 \begin{eqnarray*}
  \g(l)&:=&\frac{1}{2} \int_0^l\left(\frac{1}{\p_+(m)}-\frac{1}{\p_-(m)}\right)dm.
 \end{eqnarray*}
\end{assumption}

\no Let us denote by $\psi_\pm$ the right-continuous inverses of $\p_\pm$. We also define the functions $A_\pm:\R_+\to\R$  via $(\p_+,\p_-)$ by
\begin{eqnarray}
 A_\pm(l) & := & A_\pm(0) + \int_0^{l} {\frac{dz}{\p_\pm(z)}} e^{\g(z)}\int_z^{\infty} {e^{-\g(m)} \,F''(dm)}, \label{def:A}\\
 A_\pm(0) &:=& \pm F'(0) \pm \int_0^{\infty} {e^{-\g(m)} \,F''(dm)}. \label{def:H'(0)}
\end{eqnarray}
Throughout this paper, the derivatives under consideration are in the sense of distributions, and whenever possible, we pick a ``nice'' representative for such distribution. In particular, in the formula above, $F'$ and $F''$ stand for the right derivative of $F$ and the Lebesgue-Stieltjes measure relative to $F'$ respectively, which are well-defined since $F$ is a convex function.

\subsubsection{{\rm Quasi-Sure Inequality}}\label{sec:pathwise}

We start by showing a quasi-sure inequality, which is a key step in our analysis.
 It implies that, in order to construct a super-replication strategy, it suffices to consider a pair $(\p_+,\p_-)$ satisfying Assumption \ref{hyp:onemarginal-phi}. The duality and the optimality will follow once we find an optimal pair as it will be shown in the next section.

\begin{proposition}\label{prop:pathineqsur}
\no Under Assumptions \ref{hyp:F} and \ref{hyp:onemarginal-phi}, the following inequality holds
\begin{eqnarray}\label{eq:pathineqsur} 
  \int_0^T \D_t \,dX_t + H(X_T)&\geq& F(L_T),\quad \P -\mbox{a.s. for all } \P\in\Pc, \label{eq:pathineq}
\end{eqnarray}
where
\begin{align}
 \D_t & :=   A_+(L_t) \mathbf{1}_{\{X_t > 0\}} - A_-(L_t) \mathbf{1}_{\{X_t\leq 0\}}, &\text{for all } t\in [0,T], \label{eq:delta1}\\
  H(\pm x) & :=  F(0) + \int_0^{\pm x} {A_\pm\left(\psi_\pm (y)\right)\, dy}, &\text{for all } x\geq0. \label{def:H}
\end{align}
\end{proposition}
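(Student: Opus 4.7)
The plan is to apply the Itô-Tanaka formula to a carefully chosen auxiliary function of the joint process $(X_t, L_t)$ and thereby reduce the claimed quasi-sure inequality to a deterministic pointwise inequality in $(x, l)$. Concretely, I would introduce
$$U(x,l) \;:=\; A_+(l)\,x\,\mathbf{1}_{\{x \geq 0\}} \;-\; A_-(l)\,x\,\mathbf{1}_{\{x \leq 0\}},$$
designed so that $\partial_x U(\cdot, l) = \Delta(\cdot, l)$ away from $x = 0$. This $U$ is continuous on $\R$, linear in $x$ on each half-line (so the absolutely continuous part of $\partial_{xx} U$ vanishes), and satisfies $\partial_l U(0, l) = 0$.

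Applying the generalized Itô-Tanaka formula to the continuous semimartingale $U(X_t, L_t)$ under $\P \in \Pc$, three simplifications occur: (i) the $\int \partial_l U\,dL$ term drops out since $dL_t$ charges only $\{X_t = 0\}$, where $\partial_l U$ vanishes; (ii) the absolutely continuous second-derivative term is zero by piecewise linearity in $x$; (iii) the kink of $\partial_x U$ across $x = 0$, of size $A_+(L_t) + A_-(L_t)$, contributes a local-time integral. This should yield
$$\int_0^T \Delta_t\, dX_t \;=\; U(X_T, L_T) - \Psi(L_T),\qquad \Psi(l) := \frac{1}{2}\int_0^l (A_+(m)+A_-(m))\,dm.$$
Substituting into \reff{eq:pathineqsur} reduces the claim to the deterministic pointwise inequality
$$\mathcal{G}(x, l) \;:=\; U(x, l) + H(x) - F(l) - \Psi(l) \;\geq\; 0,\qquad (x, l) \in \R \times \R_+,$$
which no longer involves any particular $\P$ or any stochastic integral.

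It remains to verify $\mathcal{G} \geq 0$ pointwise. By the parallel $\pm$-construction it suffices to treat $x \geq 0$, where $\mathcal{G}(x, l) = A_+(l)\,x + H(x) - F(l) - \Psi(l)$. I would compute $\partial_x \mathcal{G}(x, l) = A_+(l) + A_+(\psi_+(x))$ and exploit the monotonicity of $A_+$ (non-decreasing since $F'' \geq 0$) to locate stationary points relative to the boundary $x = \phi_+(l)$ arising from the Vallois construction. The key algebraic tool is the identity
$$A_+'(l)\,\phi_+(l) \;=\; A_-'(l)\,\phi_-(l) \;=\; e^{\g(l)} \int_l^{\infty} e^{-\g(m)}\, F''(dm) \;\geq\; 0,$$
immediate from \reff{def:A}. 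Combined with an integration by parts in the identity $H(\phi_\pm(l)) = F(0) + \int_0^l A_\pm(z)\,d\phi_\pm(z)$, obtained by changing variable $y = \phi_\pm(z)$ in the definition of $H$, this allows an explicit evaluation of $\mathcal{G}$ along the boundary curves $x = \phi_\pm(l)$, after which the inequality propagates to the full domain by a sign analysis of $\partial_x \mathcal{G}$.

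The main obstacle is the pointwise check. The Itô-Tanaka identity and the reduction step are essentially mechanical. The pointwise inequality, by contrast, demands a careful manipulation of the piecewise integral formulas for $A_\pm$, $H$ and $\Psi$, invoking the ODE above and the positivity of the weighted integrals $\int_l^\infty e^{-\g(m)}\,F''(dm)$ provided by convexity of $F$. Sign conventions across the two $\pm$-branches and across the kink of $U$ (and possibly $H$) at $x = 0$ must be tracked with care, and the behavior at the Vallois boundaries $\phi_\pm(l)$ is the crux of the verification.
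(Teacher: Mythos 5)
Your It\^o--Tanaka reduction to a deterministic pointwise inequality is the right strategy and coincides structurally with the paper's proof (the paper's Part~(ii) performs the same reduction, Part~(i) does the pointwise check). However, the pointwise inequality $\mathcal{G}\geq 0$ you arrive at is false, so the step you flag as ``the main obstacle'' cannot be overcome. Test it on the simplest admissible $F$, namely $F(l)=l$: then $F''\equiv 0$, so \reff{def:A}--\reff{def:H'(0)} give $A_+\equiv 1$, $A_-\equiv -1$; hence $U(x,l)=x$, $\Psi\equiv 0$, and $H(x)=|x|$ by \reff{def:H}, so $\mathcal{G}(x,l)=2x^+-l$, which is negative once $l>2x^+$. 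Consistently, your plan to ``locate stationary points relative to $x=\phi_+(l)$'' cannot get off the ground: for $x>0$, $\partial_x\mathcal{G}(x,l)=A_+(l)+A_+(\psi_+(x))$ has both summands of the same sign, so there is no sign change at the Vallois boundary to exploit.

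The problem is not in your It\^o--Tanaka mechanics but in the displayed $\Delta_t$, to which you calibrated $U$ via $\partial_xU=\Delta$. The strategy that makes the argument work is
\begin{equation*}
 \Delta_t \;=\; -A_+(L_t)\,\mathbf{1}_{\{X_t>0\}} \;-\; A_-(L_t)\,\mathbf{1}_{\{X_t\le 0\}},
\end{equation*}
i.e.\ $\Delta_t=\partial_x u(X_t,L_t)$ for the paper's function $u(x,l)=-A_+(l)x^++A_-(l)x^-+A_+(l)\phi_+(l)-H(\phi_+(l))+F(l)$ from Part~(ii), whose $x^+$-coefficient is $-A_+(l)$, not $+A_+(l)$ (the same sign appears in the stochastic-control heuristic of Section~\ref{sect:stochcontrol}, and carrying out the It\^o computation in Part~(ii) indeed produces this $dX$-coefficient). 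For $F(l)=l$ this gives $\Delta_t=-\mathrm{sgn}(X_t)$ and Tanaka's formula yields equality $\int_0^T\Delta_t\,dX_t+|X_T|=L_T$. With the sign corrected, your auxiliary function becomes $U(x,l)=-A_+(l)x^++A_-(l)x^-$, the kink contributes $-\tfrac12\left(A_+(l)-A_-(l)\right)dL_t$ rather than $+\tfrac12\left(A_+(l)+A_-(l)\right)dL_t$, and the pointwise inequality to verify rearranges, via \reff{eq:Hrel1} and \reff{eq:Hrel3}, into the tangent-line bound $H(x)\geq A_\pm(l)\left(x-\phi_\pm(l)\right)+H(\phi_\pm(l))$ on $\pm x\ge 0$ for the convex restrictions $H\vert_{\R_\pm}$, passing from the $+$ to the $-$ side by \reff{eq:Hrel2} --- which is exactly what the paper's Part~(i) does. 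In that corrected form, $\partial_x$ of the pointwise quantity equals $-A_+(l)+A_+(\psi_+(x))$ for $x>0$, vanishing at $x=\phi_+(l)$, and your stationary-point plan then goes through as intended.
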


\begin{remark}
 The derivation of the semi-static strategy $(\D,H)$ is performed in Section \ref{sect:stochcontrol} by means of the stochastic control approach. 
\end{remark}

\no Before giving the proof of Proposition~\ref{prop:pathineqsur}, we show that the quasi-sure inequality yields an upper bound for $U^{\mu}(F(L_T))$.

\begin{corollary}\label{prop:upper bound}
 Under Assumptions \ref{hyp:F} and \ref{hyp:onemarginal-phi}, one has for any centered probability measure $\mu$,
 \begin{eqnarray*} 
  P^{\mu}(F(L_T))  \leq U^{\mu}(F(L_T)) \leq \mu(H).
 \end{eqnarray*}
\end{corollary}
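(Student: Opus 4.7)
The first inequality $P^{\mu}(F(L_T))\le U^{\mu}(F(L_T))$ is the weak duality already displayed in Section~\ref{sec:robusthedging}: for any admissible $(\Delta,H)\in\overline{\mathcal{H}}^{\mu}\times L^1(\mu)$ realizing $Y^{\Delta,H}_T\ge F(L_T)$ and any $\P\in\Pc^{\mu}$, taking $\P$-expectation, using $\E^{\P}[Y^{\Delta}_T]\le Y_0$ (supermartingale property) and $\E^{\P}[H(X_T)]=\mu(H)$, yields $Y_0\ge \E^{\P}[F(L_T)]$, and optimizing over $\P$ gives the bound. So there is nothing to do for the left inequality; the content of the corollary is the right one.

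For the right inequality the plan is to exhibit the semi-static pair $(\Delta,H)$ from Proposition~\ref{prop:pathineqsur}, with initial wealth $Y_0=\mu(H)$, as a super-replicating strategy. By \reff{eq:pathineqsur}, the portfolio value
\begin{eqnarray*}
Y^{\Delta,H}_T \;=\; Y_0+\int_0^T \Delta_t\,dX_t -\mu(H)+H(X_T) \;=\; \int_0^T \Delta_t\,dX_t +H(X_T)
\end{eqnarray*}
dominates $F(L_T)$ quasi-surely. Taking the infimum over super-replicating strategies will then give $U^{\mu}(F(L_T))\le\mu(H)$, conditional on two admissibility checks.

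First, $H\in L^1(\mu)$. Since $F$ is Lipschitz convex (Assumption~\ref{hyp:F}), the right derivative $F'$ is bounded and $F''$ is a finite positive Borel measure on $\R_+$. Inspecting \reff{def:A}-\reff{def:H'(0)} shows that $A_\pm$ are then bounded on compacts of $\R_+$, and the change of variables $y=\p_\pm(l)$ in \reff{def:H} yields at most linear growth for $H$. Together with $\int|x|\,d\mu(x)<\infty$ (a necessary condition for $\Pc^{\mu}\neq\emptyset$), this gives $H\in L^1(\mu)$.

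The main obstacle is the admissibility $\Delta\in\overline{\Hc}^{\mu}$, namely that $M_t:=\int_0^t\Delta_s\,dX_s$ is a $\P$-supermartingale for every $\P\in\Pc^{\mu}$. The strategy is standard: $\Delta$ is $\F$-predictable and, path by path, locally bounded because $L$ is continuous and $A_\pm$ is locally bounded, so $M$ is a $\P$-local martingale. Using the Lipschitz bound $F(L_T)\le F(0)+\kappa L_T$, the identity $\E^{\P}[L_T]=\E^{\P}[|X_T|]=\int|x|\,d\mu(x)<\infty$ for $\P\in\Pc^{\mu}$, and $H(X_T)\in L^1(\P)$ from the previous step, the pathwise inequality \reff{eq:pathineqsur} furnishes an integrable lower bound $M_T\ge F(L_T)-H(X_T)\in L^1(\P)$. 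A Fatou argument along a localizing sequence then upgrades the local martingale $M$ into a genuine $\P$-supermartingale, closing the proof.
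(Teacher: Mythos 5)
Your decomposition into (i) weak duality, (ii) exhibiting the semi-static pair from Proposition~\ref{prop:pathineqsur}, and (iii) the two admissibility checks is exactly the structure of the paper's proof, so the approach is the same. Two of your admissibility steps are, however, stated imprecisely in a way that matters.

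First, for $H\in L^1(\mu)$ you assert that $A_\pm$ are ``bounded on compacts'' and deduce linear growth of $H$. Bounded-on-compacts does not give linear growth of $H$: since $H'(\pm x)=\pm A_\pm(\psi_\pm(x))$ and $\psi_\pm(x)\to\infty$ as $x\to\infty$ (or can), you need $A_\pm$ to be bounded \emph{globally} on $\R_+$. This global bound is genuinely the content of Lemma~\ref{lem:technic} (it yields $\|A_\pm\|_\infty\le 3\|F'\|_\infty$ and hence $\|H'\|_\infty<\infty$) and is not an inspection-level fact; the cancellation between the two terms in \reff{def:A}--\reff{def:H'(0)} via the identity~\eqref{eq:Hrel1} is what makes it work. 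In the same breath, the global boundedness of $A_\pm$ also immediately gives $\Delta\in\H^2$, replacing your ``locally bounded path by path'' step.

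Second, the Fatou step. A lower bound on $M_T$ alone is not enough to upgrade a local martingale to a supermartingale: along a localizing sequence $(\tau_n)$ you need $M_{\tau_n\wedge t}\ge -N_{\tau_n\wedge t}$ for a process $N$ for which $(N_{\tau_n\wedge t})_n$ is uniformly integrable. The paper gets this by noting that the quasi-sure inequality, applied at every deterministic time $t$, yields $M_t\ge -C(L_t+|X_t|)$ with $C=\|F'\|_\infty\vee\|H'\|_\infty$, and then crucially that $N_t:=C(L_t+|X_t|)$ is a non-negative submartingale, whence $0\le N_{\tau_n\wedge t}\le \E[N_t\mid\Fc_{\tau_n\wedge t}]$ gives the required uniform integrability and closes the Fatou argument cleanly. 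Your sketch would need to be completed in this way; as written it only invokes a terminal integrable bound, which is insufficient.
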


\begin{proof}
 In view of Proposition~\ref{prop:pathineqsur}, it suffices to show that $H\in L^1(\mu)$ and $\D\in\overline{\Hc}^\mu$. As proved in Lemma~\ref{lem:technic} below, the maps $A_\pm$ are bounded. In particular, $H'$ is bounded and thus $\mu(|H|)<\infty$. In addition, $\D$ is bounded and thus $\D\in\H^2$. It remains to prove that the local martingale $(\int_0^t {\D_s \,dX_s})_{t\geq0}$ is a supermartingale. The quasi-sure inequality \eqref{eq:pathineq} implies that for all $\P\in\Pc^{\mu}$,
 \beaa
  \int_0^t {\D_s dX_s} \ge -C(L_t~+~|X_t|) \quad\mbox{for all } t\in [0,T],~ \P -\mbox{a.s.},
 \eeaa
 where $C:=\|F'\|_{\infty}\vee \|H'\|_{\infty}$. Denote $M_t:=\int_0^t {\D_s dX_s}$ and $N_t:=C(L_t + |X_t|)$. 
 Given $(\tau_n)_{n\in\NN}$ a sequence of stopping times that reduces $(M_t)_{t\geq 0}$, it follows by Fatou's Lemma that for all $s\leq t$,
 \begin{eqnarray}\label{eq:supermartingale}
  \E\left[M_t + N_t \,|\, \Fc_s\right] \leq M_s + \liminf_{n\to\infty}\E\left[ N_{\tau_n\wedge t} \,|\, \Fc_s\right].
 \end{eqnarray} 
 In addition, clearly, $(N_t)_{t\geq 0}$ is a non-negative submartingale and thus it holds
 \begin{eqnarray*}
  0\leq N_{\tau_n\wedge t} \leq \E\left[N_t \,|\, \Fc_{\tau_n\wedge t}\right].
 \end{eqnarray*}
 In particular, the sequence $(N_{\tau_n\wedge t})_{n\in\NN}$ is uniformly integrable. Hence, it follows immediately from~\eqref{eq:supermartingale} that $(M_t)_{t\geq 0}$ is a supermartingale.  \hfill\quad \qed
\end{proof}

\no The rest of the section is devoted to the proof of Proposition~\ref{prop:pathineqsur}. We start by establishing a technical lemma.

\begin{lemma}\label{lem:technic}
With the notations of Proposition \ref{prop:pathineqsur}, the maps $A_\pm$ are uniformly bounded on $\R_+$ and it holds for all $l>0$,
 \bea
  \frac{1}{2}\left(A_+(l) - A_-(l)\right) &=& F'(l) + e^{\g(l)} \int_l^{\infty} {e^{-\g(m)} \,F''(dm)}, \label{eq:Hrel1} \\
  H(\p_+(l)) - A_+(l)\p_+(l)  &=& H(\p_-(l)) - A_-(l)\p_-(l). \label{eq:Hrel2}
 \eea
\end{lemma}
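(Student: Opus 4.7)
Set $G(l) := \int_l^\infty e^{-\gamma(m)} F''(dm)$. Two preliminary observations will be the workhorses: since $F$ is Lipschitz convex, $F''$ is a finite positive measure on $\R_+$ with total mass $F'(\infty) - F'(0) \le 2\|F'\|_\infty$, and since $\gamma$ is nondecreasing one gets the uniform a priori bound $e^{\gamma(l)} G(l) \le F''([l,\infty)) \le 2\|F'\|_\infty$; and from the defining integrals, $A_+$ is nondecreasing while $A_-$ is nonincreasing, since the respective integrands have constant sign ($\phi_+ > 0$, $\phi_- < 0$, $G \ge 0$).

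I first prove (\ref{eq:Hrel1}) and then deduce the boundedness of $A_\pm$. Subtracting the defining integrals of $A_+$ and $A_-$ and invoking $2\gamma'(z) = 1/\phi_+(z) - 1/\phi_-(z)$ reduces (\ref{eq:Hrel1}) to evaluating $\int_0^l \gamma'(z) e^{\gamma(z)} G(z) dz$. A direct Fubini computation (swapping the $z$- and $m$-integrals in $\int_0^l \gamma'(z)e^{\gamma(z)} \int_z^\infty e^{-\gamma(m)}F''(dm)\,dz$ and splitting on $m \le l$ vs.\ $m > l$) evaluates this to $e^{\gamma(l)} G(l) - G(0) + F'(l) - F'(0)$, from which (\ref{eq:Hrel1}) is immediate after combining with the constant $\tfrac{1}{2}(A_+(0) - A_-(0)) = F'(0) + G(0)$. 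Boundedness then follows at once: (\ref{eq:Hrel1}) together with the uniform bounds on $F'$ and $e^\gamma G$ yields a uniform bound on $A_+ - A_-$, and combining this with the monotonicity of each $A_\pm$ and the explicit finite values $A_\pm(0)$ produces uniform upper and lower bounds on both.

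For (\ref{eq:Hrel2}) the plan is to compute both sides in closed form. Using a Stieltjes change of variables $y = \phi_+(z)$ (valid because $\psi_+$ is the right-continuous generalized inverse of the monotone $\phi_+$), one gets $H(\phi_+(l)) = F(0) + \int_0^l A_+(z) d\phi_+(z)$; then a Stieltjes integration by parts---clean because $A_+$ is absolutely continuous---together with the identity $A_+'(z)\phi_+(z) = e^{\gamma(z)} G(z)$ yields
\[
H(\phi_+(l)) - A_+(l)\phi_+(l) = F(0) - \int_0^l e^{\gamma(z)} G(z) dz.
\]
The identical manipulation for the pair $(\phi_-, \psi_-)$ produces the same right-hand side, which establishes (\ref{eq:Hrel2}). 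The main technical subtlety is precisely in this last step: since $\phi_\pm$ are only right-continuous monotone and so may exhibit jumps and flat parts, one must verify that the change of variables correctly absorbs the contributions at jumps of $\phi_\pm$ (where $\psi_\pm$ is flat) and at flats of $\phi_\pm$ (where $\psi_\pm$ jumps). This works cleanly because $A_\pm$ is continuous and $\psi_\pm$ is the proper generalized inverse, so that the pushforward of $d\phi_\pm$ under $\phi_\pm$ coincides with Lebesgue measure on the range.
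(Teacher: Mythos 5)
Your proof is correct and follows essentially the same route as the paper: Fubini to establish~\eqref{eq:Hrel1}, then boundedness of $A_\pm$ via their monotonicity together with the uniform bound on $A_+-A_-$, and finally the Stieltjes change of variables plus integration by parts using $A_\pm'\phi_\pm = e^{\gamma}G$ to reduce both sides of~\eqref{eq:Hrel2} to $F(0) - \int_0^l e^{\gamma(z)}G(z)\,dz$. The only cosmetic differences are that you carry out the Fubini split on $\{m\le l\}$ versus $\{m>l\}$ fully explicitly (the paper performs it on $\int_z^l$ and handles the tail separately) and that you flag the jump/flat subtleties in the generalized-inverse change of variables, which the paper leaves implicit.
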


\begin{proof}
\rmi Let us start by proving~\eqref{eq:Hrel1}. We observe first that 
\begin{multline*}
  \frac{1}{2}\left(A_+(l) - A_-(l)\right) 
    = \frac{1}{2}\left(A_+(0) - A_-(0)\right) \\
    + \int_0^{l} {\g'(z) e^{\g(z)} \int_z^{\infty} {e^{-\g(m)} \,F''(dm)} \,dz}.
\end{multline*}
In addition, Fubini-Tonelli's theorem yields that 
\begin{eqnarray*}
 \int_0^{l} {\g'(z) e^{\g(z)} \int_z^{l} {e^{-\g(m)} \,F''(dm)} \,dz} &=& F'(l) -  F'(0)  - \int_0^{l} {e^{-\g(m)} \,F''(dm)}.
\end{eqnarray*}
The desired result follows immediately. 

\no\rmii Let us show next that $A_+$ is bounded. Clearly, 
\beaa
  A_+(0) \leq A_+(l) \leq A_+(l) - A_-(l) + A_-(0).
\eeaa
In addition, we have 
\begin{gather*}
  F'(0) \leq A_+(0) = -A_-(0) \leq F'(\infty), \\
 2 F'(l) \leq A_+(l) - A_-(l) \leq 2 F'(\infty),  
\end{gather*}
where the second line follows from~\eqref{eq:Hrel1}.
We deduce that $\| A_+\|_{\infty} \leq 3 \| F'\|_{\infty}$. Similarly, it holds  $\| A_-\|_{\infty} \leq 3\| F'\|_{\infty}$.

\no\rmiii Let us turn now to the proof of~\eqref{eq:Hrel2}. By change of variable, we get
\begin{equation*}
  H(\p_+(l)) - H(\p_+(0)) = \int_{\p_+(0)}^{\p_+(l)} {A_+\left(\psi_+(y)\right) \, dy} = \int_{[0,l]} {A_+(m) \, \p_+'(dm)}.
\end{equation*}
In addition, integration by parts (see, e.g., Bogachev~\cite[Ex.5.8.112]{bogachev-07}) yields that
\begin{equation*}
 \int_{[0,l]} {A_+(m) \, \p_+'(dm)} = A_+(l)\p_+(l) - A_+(0)\p_+(0) - \int_{0}^l {A_+'(m) \p_+(m) \,dm}.
\end{equation*}
Using further $H(0) = H(\p_+(0)) - A_+(0)\p_+(0)$, we obtain
 \bea\label{eq:Hrel3}
  H(\p_+(l)) - A_+(l)\p_+(l) &=&  H(0) - \int_0^{l} {e^{\g(z)} \int_z^{\infty} {e^{-\g(m)} \,F''(dm)} \,dz}.
\eea
 Similarly, $H(\p_-(l)) - A_-(l)\p_-(l)$ coincides with the r.h.s. above, which ends the proof.\hfill\quad\qed
\end{proof}

\begin{proof}[of Proposition~\ref{prop:pathineqsur}]
Let us define $u:\R\times\R_+\to \R$ by 
\beaa
 u(x,l) &:=& - A_+(l) x^+ + A_-(l) x^- + A_+(l)\p_+(l) - H(\p_+(l)) + F(l).
\eeaa
\rmi We start by proving that $u(x,l)\geq F(l)-H(x)$ for all $x\in\R$, $l\geq 0$. Clearly, the restriction of $H$ to $\R_+$ (resp. $\R_-$) is a convex function and we have
\beaa
  \lim_{x\uparrow \p_\pm(l)} H'(x) \leq A_\pm(l) \leq  \lim_{x\downarrow \p_\pm(l)} H'(x).
\eeaa 
Thus, it holds 
\begin{equation*}
  H(x) \geq A_+(l)\left( x-\p_+(l)\right) + H(\p_+(l)),\quad \text{for all } x\geq 0 ,l\geq 0.
\end{equation*}
This yields that $u(x,l)\geq F(l)-H(x)$ for all $x\geq 0$, $l\geq 0$. Similarly, we have
\begin{equation*}
  H(x) \geq A_-(l)\left( x-\p_-(l)\right) + H(\p_-(l)),\quad \text{for all } x\leq 0 ,l\geq 0.
\end{equation*}
Using further~\eqref{eq:Hrel2}, we conclude that $u(x,l)\geq F(l)-H(x)$ for all $x\leq 0$, $l\geq 0$.

\no \rmii Let us show next that
\beaa
 u\left(X_T,L_T\right) &=& \int_0^T {\Delta_t \,dX_t},\quad \P -\mbox{a.s. for all } \P\in\Pc.
\eeaa
Using successively It\^o-Tanaka's formula and the relation~\eqref{eq:Hrel1}, we derive
\begin{multline*}
  - A_+(L_T) X_T^+ + A_-(L_t) X_T^- \\
  \begin{aligned}
   & = \int_0^T {\D_t \,dX_t} - \frac{1}{2}\int_0^{L_T} {\left(A_+(l) - A_-(l)\right) \,dl}\\
   & = \int_0^T {\D_t \,dX_t} - F(L_T) + F(0) - \int_0^{L_T} {e^{\g(l)} \int_l^{\infty} {e^{-\g(m)} \,F''(dm)} \,dl}.
  \end{aligned}
\end{multline*}
We deduce that
\begin{multline*}
 u(X_T,L_T) =  \int_0^T {\D_t \,dX_t} + A_+(L_T)\p_+(L_T) - H(\p_+(L_T)) \\
   + F(0) - \int_0^{L_T} {e^{\g(l)} \int_l^{\infty} {e^{-\g(m)} \,F''(dm)} \,dl}.
\end{multline*}
The desired result follows immediately by using~\eqref{eq:Hrel3}.

\no \rmiii We conclude the proof as follows:
\begin{equation*}
 F(L_T) - H(X_T) \leq u(X_T,L_T) =  \int_0^T {\D_t \,dX_t},
\end{equation*}
where the first inequality comes from Part~(i) above.\hfill\quad\qed
\end{proof}

\begin{remark}
 We refer to Section~\ref{sect:stochcontrol} for a comprehensive presentation of the arguments that led us to consider the function $u$ in the proof of Proposition~\ref{prop:pathineqsur}.
\end{remark}

\subsubsection{{\rm Optimality and Duality}}\label{sec:upper bound}

 In view of Corollary~\ref{prop:upper bound}, the duality is achieved once we find a suitable pair $(\p_+^{\mu},\p_-^{\mu})$ such that the corresponding static strategy $H^\m$ satisfies the relation $\mu(H^\m)=P^{\mu}(F(L_T))$. In this section, we use Vallois' solution to the SEP to construct such a pair and to provide optimizers for both $U^{\mu}(F(L_T))$ and $P^{\mu}(F(L_T))$.
 
\no We start by stating a proposition due to Vallois, which provides a solution to the SEP based on the local time. Recall that $(B_t)_{t\ge 0}$ and $(L^B_t)_{t\ge 0}$ denote a Brownian motion and its local time at zero respectively.

\begin{proposition}\label{prop:vallois}
 Under Assumption~\ref{hyp:mu}, there exists a pair $(\p^\m_+,\p^\m_-)$ satisfying Assumption~\ref{hyp:onemarginal-phi} such that the stopping time
\begin{eqnarray*}
 \t^\m& :=& \inf~ \big\{t>0:~ B_t\notin\big ]\p^\m_-(L^B_t),~ \p^\m_+(L^B_t)\big[\big\}
\end{eqnarray*}
provides a solution to the SEP, i.e., $B^{\t^\m}:= (B_{\t^\m\wedge t})_{t\ge 0}$ is uniformly integrable and $B_{\t^\m} \sim\m$.
\end{proposition}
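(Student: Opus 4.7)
The plan is to follow Vallois' original construction from~\cite{vallois-83}, which determines the pair $(\phi_+^\mu,\phi_-^\mu)$ by computing the joint law of $(B_{\tau},L^B_{\tau})$ via It\^o's excursion theory and then matching the resulting marginal of $B_\tau$ to $\mu$. Suppose first that an arbitrary pair $(\phi_+,\phi_-)$ satisfying Assumption~\ref{hyp:onemarginal-phi} is given and let $\tau$ be defined as in the statement. Since the It\^o excursion measure of Brownian excursions reaching level $a>0$ has rate $1/(2a)$ per unit of local time (and symmetrically on the negative side), the accumulated local time $\Lambda:=L^B_\tau$ satisfies $\P(\Lambda>l)=e^{-\gamma(l)}$, and conditional on $\{\Lambda=l\}$ the exit occurs at $\phi_+(l)$ with probability $-\phi_-(l)/(\phi_+(l)-\phi_-(l))$ and at $\phi_-(l)$ with the complementary probability. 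Observe that $\E[B_\tau\mid\Lambda=l]=0$, so the martingale balance is automatic once UI is established.

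To enforce $B_\tau\sim\mu$, I would match the tail probabilities on each side of the origin. Using that $\phi_+$ is nondecreasing with right-continuous inverse $\psi_+$, the embedding condition $\P(B_\tau>x)=\mu(]x,\infty[)$ reduces to
\[
  \mu(]x,\infty[) \;=\; \int_{\psi_+(x)}^\infty \frac{-\phi_-(l)}{\phi_+(l)-\phi_-(l)}\,\gamma'(l)\,e^{-\gamma(l)}\,dl,\qquad x>0,
\]
together with the symmetric identity on the negative axis. Differentiating in $x$ decouples the system into a pair of first-order ODEs whose solutions $\phi_\pm^\mu$ can be given more or less explicitly in terms of the potential of $\mu$. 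Vallois shows that under Assumption~\ref{hyp:mu} this recipe produces a unique pair of monotone right-continuous functions $\phi_+^\mu:\,]0,\infty[\,\to\,]0,\infty[$, $\phi_-^\mu:\,]0,\infty[\,\to\,]-\infty,0[$. The boundary behaviour $\gamma^\mu(0+)=0$ is forced by $\mu$ having no atom at zero (otherwise infinitely many small excursions near $t=0$ would immediately stop the process before $\Lambda$ can grow), whereas $\gamma^\mu(\infty)=\infty$ follows from $\mu(\R)=1$, which is equivalent to $\Lambda<\infty$ almost surely.

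Finally, to conclude uniform integrability of $B^{\tau^\mu}$, I would use the pointwise bound $|B_{t\wedge\tau^\mu}|\le \phi_+^\mu(\Lambda)+|\phi_-^\mu(\Lambda)|$ together with $\E[\phi_+^\mu(\Lambda)+|\phi_-^\mu(\Lambda)|]=\int_{\R}|x|\,d\mu(x)<\infty$, which holds by Assumption~\ref{hyp:mu}. This dominates the family $\{B_{t\wedge\tau^\mu}\}_{t\ge 0}$ by an integrable random variable and hence yields UI. The main obstacle I expect lies in the construction step: one must show that the decoupled ODE system admits a \emph{global} monotone solution on $]0,\infty[$ under the rather weak hypothesis that $\mu$ is only centered without atom at zero, and one must control the behaviour as $l\downarrow 0$, where both $\phi_\pm^\mu(l)\to 0$ at rates governed by the tails of $\mu$ near the origin. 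Since the full analysis is carried out in detail in Vallois~\cite{vallois-83}, the proof at this point reduces to a careful verification that his construction applies under precisely Assumption~\ref{hyp:mu}.
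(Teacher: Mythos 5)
The paper itself gives no proof of Proposition~\ref{prop:vallois} beyond a one-line citation to Vallois~\cite{vallois-83} and Cox--Hobson--Obl\'oj~\cite{cox-hobson-obloj-08}, so your sketch supplies more detail than the paper does. The excursion-theoretic heart of it is sound: the exponential law $\P(\Lambda>l)=e^{-\gamma(l)}$, the conditional exit weights given $\{\Lambda=l\}$, and the tail-matching relation are all correct and lead to the implicit ODE characterization of $\psi_\pm^\mu$ that the paper records in Remark~\ref{symmetric}. One inaccuracy: differentiating in $x$ does \emph{not} decouple the problem into two independent scalar ODEs, because $\gamma$ depends jointly on $\phi_+$ and $\phi_-$; the system decouples only when $\mu$ is symmetric, which is exactly why Remark~\ref{symmetric} gives a closed formula for $\psi^\mu$ only in that case.

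The genuine gap is the uniform integrability step. The bound $|B_{t\wedge\tau^\mu}|\le\phi_+^\mu(\Lambda)+|\phi_-^\mu(\Lambda)|$ is correct, but the asserted identity $\E[\phi_+^\mu(\Lambda)+|\phi_-^\mu(\Lambda)|]=\int_\R|x|\,d\mu(x)$ is false, and --- more seriously --- the left-hand side need not even be finite. Indeed, $\Lambda$ has density $\gamma'(l)e^{-\gamma(l)}$ with $\gamma'(l)=\tfrac{1}{2}\big(\phi_+(l)^{-1}+|\phi_-(l)|^{-1}\big)$, and a direct computation gives
\[
  \E\big[\phi_+(\Lambda)+|\phi_-(\Lambda)|\big]
  = \int_0^\infty\frac{\big(\phi_+(l)+|\phi_-(l)|\big)^2}{2\,\phi_+(l)\,|\phi_-(l)|}\,e^{-\gamma(l)}\,dl
  \;\ge\; 2\int_0^\infty e^{-\gamma(l)}\,dl = 2\int_\R|x|\,d\mu(x),
\]
so it is always at least twice the first moment of $\mu$, and the integrand carries the extra, possibly unbounded, factor $\tfrac{1}{2}\big(\phi_+/|\phi_-|+|\phi_-|/\phi_++2\big)$, which the hypothesis $\int|x|\,d\mu<\infty$ does not control. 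Your domination argument therefore does not close. Note that the analogous step in the paper's proof of Theorem~\ref{thm:vallois2} works only because there $\mu$ is symmetric, so $\phi_\pm=\pm\phi$ and one has the genuine pathwise bound $|B_{t\wedge\tau}|\le\phi(L_{t\wedge\tau})\le\phi(\Lambda)=|B_\tau|$; in the asymmetric setting of Proposition~\ref{prop:vallois}, $\max(\phi_+(\Lambda),|\phi_-(\Lambda)|)$ is strictly larger than $|B_\tau|$ on an event of positive probability, and uniform integrability must be established by the finer arguments of~\cite{vallois-83} or~\cite{cox-hobson-obloj-08} rather than by a crude domination.
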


\begin{proof}
 We refer to Vallois~\cite{vallois-83} or Cox, Hobson and Obl{\'o}j~\cite{cox-hobson-obloj-08} for a proof.\hfill\quad\qed
\end{proof}

\no\begin{remark}\label{symmetric}
If $\mu$ admits a positive density $\mu(x)$ w.r.t. the Lebesgue measure, it holds
\begin{eqnarray*}
\phi^{\mu\,\prime}_{\pm}(dl)  &=&\frac{1-\mu\left([\phi^\mu_-(l),\phi^\mu_+(l)]\right)}{2\phi^\mu_{\pm}(l)\m(\phi^\mu_{\pm}(l))} \mathbf{1}_{[0,\infty[}(l) \,dl.
\end{eqnarray*}
If we assume further that $\mu$ is symmetric, then $\p_\pm^\m=\pm \p^\m$ and
\begin{eqnarray*}
 \psi^\m(x)&=&\int_0^x {\frac{y \mu(y)}{\mu\left([y,\infty[\right) }\,dy}, \quad \text{for all } x \geq 0\label{symmetriceq} 
\end{eqnarray*}
where $\psi^\m$ denotes the inverse of $\p^\m$.
\end{remark}

\no The following theorem, which is the main result of this section, shows that the pair $(\p_+^{\mu},\p_-^{\mu})$ given by Vallois yields the duality and the optimizers for both $U^{\mu}(F(L_T))$ and $P^{\mu}(F(L_T))$.

\begin{theorem}\label{thm:optimality}
Under Assumptions~\ref{hyp:F} and \ref{hyp:mu}, there is no duality gap, i.e.,
\beaa
  P^{\mu}(F(L_T)) = U^{\mu}(F(L_T)) = \mu(H^\m),
\eeaa
 where $H^\m$ is constructed by \eqref{def:H} from $(\p_+^\m,\p_-^\m)$. In addition, there exists an optimizer $\P^\m$ for  $P^{\mu}(F(L_T))$ such that
 \bea \label{eq:optimalheding}
  \int_0^T \D^\m_t dX_t ~+~ H^\m(X_T)&=&F(L_T),\quad \P^\m-\mbox{a.s.},
\eea
 where the process $\D^\m$ is given by \eqref{eq:delta1} with $(\p_+^\m,\p_-^\m)$.
\end{theorem}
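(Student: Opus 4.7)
The plan is to use the chain of inequalities $P^\mu(F(L_T)) \le U^\mu(F(L_T)) \le \mu(H^\mu)$, already supplied by the weak duality \eqref{weakdual} and by Corollary~\ref{prop:upper bound} applied to the Vallois pair $(\p_+^\m,\p_-^\m)$ from Proposition~\ref{prop:vallois}, and to close this chain by exhibiting a measure $\P^\m\in\Pc^\m$ that attains $\mu(H^\m)$ and along which the quasi-sure inequality \eqref{eq:pathineq} is saturated. Corollary~\ref{prop:upper bound} requires the Vallois pair to satisfy Assumption~\ref{hyp:onemarginal-phi}, which is exactly the content of Proposition~\ref{prop:vallois}, so the three quantities above are well-defined with the same functions $\D^\m$ and $H^\m$ that appear in the statement of Theorem~\ref{thm:optimality}.

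To construct $\P^\m$, I would invoke the standard time-change recipe already used in \cite{galichon-henry-touzi-14,guo-tan-touzi-15}: take a Brownian motion $(B_t)_{t\ge 0}$ and the Vallois stopping time $\t^\m$ from Proposition~\ref{prop:vallois}, embed the stopped trajectory $(B_{\t^\m\wedge s})_{s\ge 0}$ into $C([0,T],\R)$ via a continuous, strictly increasing time change $s=\sigma(t)$ with $\sigma(T^-)=\infty$, and let $\P^\m$ be the law on $\Omega$ of the resulting process $X_t:=B_{\t^\m\wedge \sigma(t)}$. Since $B^{\t^\m}$ is uniformly integrable with $B_{\t^\m}\sim\m$, the canonical process $X$ is a true $\P^\m$-martingale with $X_T\stackrel{\P^\m}{\sim}\m$, hence $\P^\m\in\Pc^\m$. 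Time-change invariance of local time gives $L_T\stackrel{\P^\m}{=}L^B_{\t^\m}$ almost surely.

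The crucial step is to verify that along $\P^\m$ the pathwise inequality \eqref{eq:pathineq} becomes the equality \eqref{eq:optimalheding}. Inspecting part~(i) of the proof of Proposition~\ref{prop:pathineqsur}, the inequality $u(x,l)\ge F(l)-H^\m(x)$ follows from two tangent-line bounds on the convex functions $H^\m|_{\R_+}$ and $H^\m|_{\R_-}$, each of which is saturated precisely on the graphs $x=\p_+^\m(l)$ and $x=\p_-^\m(l)$, respectively; the matching of the two resulting affine functions on these graphs is exactly \eqref{eq:Hrel2}. But by the very definition of the Vallois stopping time one has $B_{\t^\m}\in\{\p_+^\m(L^B_{\t^\m}),\p_-^\m(L^B_{\t^\m})\}$ almost surely, so after the time change $X_T\in\{\p_+^\m(L_T),\p_-^\m(L_T)\}$ holds $\P^\m$-a.s. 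Hence part~(ii) of that proof gives
\begin{equation*}
 F(L_T)-H^\m(X_T)=u(X_T,L_T)=\int_0^T \D^\m_t\,dX_t,\qquad \P^\m\mbox{-a.s.},
\end{equation*}
which is the desired \eqref{eq:optimalheding}.

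To conclude, take $\P^\m$-expectation in the above identity. The stochastic integral $M_t:=\int_0^t \D^\m_s dX_s$ is a $\P^\m$-supermartingale by the argument of Corollary~\ref{prop:upper bound}, so $\E^{\P^\m}[M_T]\le 0$ and therefore $\E^{\P^\m}[F(L_T)]\le \m(H^\m)$. On the other hand, we already know $\E^{\P^\m}[F(L_T)]\le P^\m(F(L_T))\le U^\m(F(L_T))\le \m(H^\m)$; turning the inequalities around, if we can show $\E^{\P^\m}[M_T]\ge 0$ equality propagates through the whole chain. This lower bound comes from the same tangent-line estimate used in part~(iii) of the proof of Proposition~\ref{prop:pathineqsur}: $M_T=F(L_T)-H^\m(X_T)\ge -H^\m(X_T)+F(0)$, an integrable lower bound with $\P^\m$-expectation equal to $F(0)-\m(H^\m)$, which combined with the supermartingale property and $M_0=0$ pins $\E^{\P^\m}[M_T]=0$ (alternatively, one localises $M$ and uses the uniform bound on $\D^\m$ together with the integrability of $L_T$ and $|X_T|$ under $\P^\m$, established as in Corollary~\ref{prop:upper bound}, to pass to the limit). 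The main obstacle is this last integrability/uniform-integrability check that promotes $M$ from supermartingale to true martingale under $\P^\m$; everything else is bookkeeping around Proposition~\ref{prop:pathineqsur} and Proposition~\ref{prop:vallois}. \qed
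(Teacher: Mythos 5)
Your proposal matches parts (i)--(ii) of the paper's proof: you build $\P^\m$ by time-changing the Vallois-stopped Brownian motion, invoke the boundary property $X_T\in\{\p_+^\m(L_T),\p_-^\m(L_T)\}$ $\P^\m$-a.s.\ together with \eqref{eq:Hrel2} and part (ii) of the proof of Proposition~\ref{prop:pathineqsur}, and thereby turn the quasi-sure inequality into the $\P^\m$-a.s.\ equality \eqref{eq:optimalheding}. So far this is the paper's argument.

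The gap is in the final step, the identity $\E^{\P^\m}\bigl[\int_0^T\D^\m_t\,dX_t\bigr]=0$. Your primary argument claims that the supermartingale property of $M_t:=\int_0^t\D^\m_s\,dX_s$, the initial condition $M_0=0$, and the integrable lower bound $M_T\ge F(0)-H^\m(X_T)$ together "pin" $\E^{\P^\m}[M_T]=0$. This inference is not valid: a supermartingale starting at $0$ bounded below by an integrable random variable only satisfies $\E[M_T]\le 0$; the deterministic supermartingale $M_t=-t\wedge 1$ already shows that the expectation need not be $0$. Moreover, the expectation of your lower bound is $F(0)-\mu(H^\m)$, which is generically $\le 0$, so it provides no constraint in the needed direction. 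Your "alternative" suggestion --- localize $M$ and pass to the limit using boundedness of $\D^\m$ and integrability of $L_T$, $|X_T|$ --- is indeed the correct route and is exactly what the paper's Lemma~\ref{lem:optimality2} carries out, but it is not a routine uniform-integrability check: the paper uses a two-parameter localization by $\sigma_n=\inf\{t:|B_t|\ge n\}$ and $\rho_m=\inf\{t:L_t\ge m\}$, exploits $B_{\rho_m}=0$, and passes to the limit in $m$ and then $n$ by monotone convergence after recentering $A_\pm$ by $A_\pm(0)$ so that the integrands are signed correctly. You explicitly flag this as "the main obstacle" but do not resolve it, so the proof is incomplete at exactly its most delicate point.
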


\begin{proof}
 \rmi We start by constructing a candidate for the optimizer $\P^\m$. Denote by $\P^{\mu}$ the law of the process $Z=(Z_t)_{0\le t\le T}$ given by
 \beaa
  Z_t &:=& B_{\t^\m\wedge\frac{t}{T-t}} \quad\text{for all } t\in[0,T].
 \eeaa
 The process $Z$ clearly is a continuous martingale w.r.t. its natural filtration such that $Z_T\sim\mu$. In other words, the probability measure $\P^\m$ belongs to $\Pc^{\mu}$.
 
 \no\rmii Let us turn now to the proof of~\eqref{eq:optimalheding}. We define $u^\m: \R\times\R_+\to\R$ by
\beaa
   u^\m(x,l) &:=& - A^{\m}_+(l) x^+ + A^{\m}_-(l) x^- + A^{\m}_+(l)\p^\m_+(l) - H^\m(\p^\m_+(l)) + F(l).
\eeaa
where $A^{\m}_{\pm}$ is given by~\eqref{def:A}--\eqref{def:H'(0)} with $(\p_+^\m,\p_-^\m)$.
Since the local time is invariant under time-change, we have 
\begin{equation*}
 X_T=\p^\mu_+(L_T)\mathbf{1}_{\{X_T>0\}} + \p^\mu_-(L_T)\mathbf{1}_{\{X_T<0\}},\quad \P^\mu-\mbox{a.s.}
\end{equation*}
Notice that $\P^\mu(X_T=0) = \mu(\{0\}) = 0$ in view of Assumption~\ref{hyp:mu}.
Thus, using~\eqref{eq:Hrel2} for the case $X_T<0$, it holds
\beaa
  u^\m(X_{T},L_{T}) &=& F(L_{T}) - H(X_{T}),\quad \P^\m-\text{a.s.}
\eeaa
Further, Part~(ii) of the proof of Proposition~\ref{prop:pathineqsur} ensures that
\beaa
  u^\m(X_{T},L_{T}) &=&  \int_0^T \D^\m_t \,dX_t,\quad \P^\m-\text{a.s.}
\eeaa
Notice that the pair $(\p_+^\mu,\p_-^\m)$ satisfies Assumption~\ref{hyp:onemarginal-phi} in view of Proposition~\ref{prop:vallois}.

\no\rmiii To conclude, it remains to show that $\E^{\P^\m}[F(L_{T})] =  \mu(H^\m)$. This is achieved by taking expectation in~\eqref{eq:optimalheding} and using Lemma~\ref{lem:optimality2} below.\hfill\quad\qed
\end{proof}

\begin{lemma}\label{lem:optimality2}
 With the notations of Theorem~\ref{thm:optimality}, it holds
\beaa
  \E^{\P^\m}\left[\int_0^T \D^\m_t dX_t\right] &=& 0.
 \eeaa
\end{lemma}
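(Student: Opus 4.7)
The plan is to transport the stochastic integral to a stopped Brownian integral via the time change defining $\P^\m$, express it explicitly as $u^\m(B_\cdot,L^B_\cdot)$ stopped at $\tau^\m$, derive a linear-growth bound on $u^\m$, and conclude by uniform integrability that the stopped local martingale is actually a UI martingale with zero expectation.

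First, under $\P^\m$ the process $X$ is a continuous time change of $B^{\tau^\m}$, so $\int_0^T \D^\m_t\, dX_t$ equals in distribution $M^B_{\tau^\m}$, where $M^B_s:=\int_0^s\tilde\D^\m_r\,dB_r$ with $\tilde\D^\m_r:=A^\m_+(L^B_r)\mathbf{1}_{\{B_r>0\}}-A^\m_-(L^B_r)\mathbf{1}_{\{B_r\leq 0\}}$. Since $A^\m_\pm$ are bounded by Lemma~\ref{lem:technic}, $M^B$ is a continuous local martingale. Repeating the It\^o-Tanaka calculation from Part~(ii) of the proof of Proposition~\ref{prop:pathineqsur} at an arbitrary time $s$ (with Vallois' pair in place of a generic pair) gives the pointwise identity
\begin{equation*}
M^B_s \;=\; u^\m(B_s,L^B_s) \;=\; -A^\m_+(L^B_s)\, B_s^+ + A^\m_-(L^B_s)\, B_s^- + g^\m(L^B_s),
\end{equation*}
where I set $g^\m(l):=A^\m_+(l)\p^\m_+(l)-H^\m(\p^\m_+(l))+F(l)$.

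The main step is the algebraic simplification of $g^\m$. Combining~\eqref{eq:Hrel3}, the identity~\eqref{eq:Hrel1} from Lemma~\ref{lem:technic}, and $H^\m(0)=F(0)$, the nested integrals telescope and a direct computation yields
\begin{equation*}
g^\m(l) \;=\; \tfrac{1}{2}\int_0^l\bigl(A^\m_+(z)-A^\m_-(z)\bigr)\, dz,
\end{equation*}
so $|g^\m(l)|\leq 3\|F'\|_\infty\, l$ by the boundedness of $A^\m_\pm$ in Lemma~\ref{lem:technic}. Combined with the trivial bound on the other two terms of $u^\m$, this gives
\begin{equation*}
|M^B_{s\wedge\tau^\m}| \;\leq\; C\bigl(|B_{s\wedge\tau^\m}| + L^B_{s\wedge\tau^\m}\bigr) \quad \text{for all }s\geq 0,
\end{equation*}
for some constant $C$.

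It remains to check that this upper bound is uniformly integrable in $s$. The family $(|B_{s\wedge\tau^\m}|)_{s\geq 0}$ is UI because $B^{\tau^\m}$ is a UI martingale by Proposition~\ref{prop:vallois}. For the local time, It\^o-Tanaka gives $|B_{s\wedge\tau^\m}|=L^B_{s\wedge\tau^\m}+\int_0^{s\wedge\tau^\m}\mathrm{sgn}(B_r)\,dB_r$; the stochastic integral is an $L^2$-bounded martingale on any bounded horizon, so taking expectations and then letting $s\to\infty$ using UI of $B^{\tau^\m}$ and monotone convergence yields $\E[L^B_{\tau^\m}]=\E[|B_{\tau^\m}|]=\int_{\R}|x|\,d\mu(x)<\infty$. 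Hence $(L^B_{s\wedge\tau^\m})_{s\geq 0}$ is dominated by the integrable random variable $L^B_{\tau^\m}$ and is UI, so $(M^B_{s\wedge\tau^\m})_{s\geq 0}$ is UI as well. A UI continuous local martingale is a UI martingale, so $\E[M^B_{\tau^\m}]=M^B_0=0$, which translates back to the claim. The main difficulty is the linear-growth bound on $g^\m$: without that cancellation, $|M^B_{s\wedge\tau^\m}|$ could only be bounded using $\p^\m_+(L^B)$, which need not be integrable under Assumption~\ref{hyp:mu} alone.
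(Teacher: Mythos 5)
Your approach is genuinely different from the paper's. The paper localizes with two families of stopping times, $\sigma_n$ (on $|B|$) and $\rho_m$ (on the local time level), uses $B_{\rho_m}=0$ to kill the boundary contribution, subtracts $A_\pm(0)$ from $A_\pm$ so that the integrands have a definite sign, and passes to the limit by two successive applications of the monotone convergence theorem. You instead write $M^B_{s}=u^\m(B_s,L^B_s)$ pathwise and exploit the boundedness of $A^\m_\pm$ (Lemma~\ref{lem:technic}) together with the cancellation $g^\m(l)=\tfrac12\int_0^l\bigl(A^\m_+(z)-A^\m_-(z)\bigr)\,dz$ --- which I have checked: it follows from \eqref{eq:Hrel3}, \eqref{eq:Hrel1}, and $H^\m(0)=F(0)$ --- to obtain the linear bound $|M^B_{s\wedge\tau^\m}|\leq C\bigl(|B_{s\wedge\tau^\m}|+L^B_{\tau^\m}\bigr)$. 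This is a cleaner route, and the bound does give what you want; but the last step is not justified as you state it.

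The genuine gap is the assertion ``a UI continuous local martingale is a UI martingale.'' This is false: the inverse of a three-dimensional Bessel process started at $1$ is bounded in $L^2$ for all times (hence uniformly integrable as a family over $t$) yet is a strict local martingale, so UI of $\{M_s:s\geq 0\}$ alone does not yield the martingale property. What your bound actually delivers is the stronger \emph{class D} property, and that is what you should invoke. Indeed, the pathwise identity holds at every stopping time $\sigma$, so $|M^B_{\sigma\wedge\tau^\m}|\leq C\bigl(|B_{\sigma\wedge\tau^\m}|+L^B_{\tau^\m}\bigr)$; and since $B^{\tau^\m}$ is a UI martingale, Jensen and optional stopping give $|B_{\sigma\wedge\tau^\m}|\leq\E\bigl[|B_{\tau^\m}|\,\big|\,\Fc_{\sigma\wedge\tau^\m}\bigr]$. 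The family $\bigl\{M^B_{\sigma\wedge\tau^\m}:\sigma\text{ stopping time}\bigr\}$ is therefore dominated by conditional expectations of the fixed integrable variable $C\bigl(|B_{\tau^\m}|+L^B_{\tau^\m}\bigr)$, hence UI, and a local martingale of class D is a UI martingale. With this replacement, the argument closes and is a nice alternative to the paper's two-stage monotone-convergence proof.
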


\begin{proof}
 This result is a slight extension of Lemma~2.1 in Cox, Hobson and Obl{\'o}j~\cite{cox-hobson-obloj-08}. The proof relies on similar arguments, which we repeat here for the sake of completeness. Using the invariance of the local time under time-change, we observe first that the desired result is equivalent to 
\beaa
  \E\left[\int_0^{\tau^\m} {\left(A^\mu_+(L^B_s) \mathbf{1}_{\{B_s>0\}} + A^\mu_-(L^B_s) \mathbf{1}_{\{B_s\leq 0\}}\right) \,dB_s}\right] &=& 0.
\eeaa
For the sake of clarity, we omit the index $\m$ in the notations and we denote $L$ instead of $L^B$ in the rest of the proof. Let $\sigma_n := \inf\{t\geq 0 :\, |B_t|\geq n\}$, $\rho_m := \inf\{t\geq 0 :\, L_t \geq m\}$, $\t_{n,m} := \t\wedge\sigma_n\wedge\rho_m$ and $\t_{n} := \t\wedge\sigma_n$. We also denote 
\beaa
  M_t &:= &\int_0^{t} {\left( A_+(L_s) \mathbf{1}_{\{B_s>0\}} + A_-(L_s) \mathbf{1}_{\{B_s\leq 0\}}\right) \,dB_s}, \quad \text{ for all }t\geq0.
\eeaa
 From It\^o-Tanaka's formula, it follows that 
\beaa
   M_t &=& A_+(L_t) B_t^+ - A_-(L_t) B_t^- - \frac{1}{2} \int_0^t {\left(A_+(L_s) - A_-(L_s)\right) \,dL_s}.
\eeaa
 We deduce that the stopped local martingale $M^{\t_{n,m}}$ is bounded. Hence, it is a uniformly integrable martingale and we have 
 \begin{multline*}
  \E\left[ \frac{1}{2} \int_0^{\t_{n,m}} {\big(A_+(L_s) - A_-(L_s)\big) \,dL_s} \right] \\
  \begin{aligned}
    & = \E\left[A_+(L_{\t_{n,m}}) B_{\t_{n,m}}^+ - A_-(L_{\t_{n,m}}) B_{\t_{n,m}}^-\right] \\
    & =~~ \E\left[\left(A_+(L_{\t_{n}}) B_{\t_{n}}^+ - A_-(L_{\t_{n}}) B_{\t_{n}}^-\right) \mathbf{1}_{\{\tau_n < \rho_m\}}\right],  
  \end{aligned}
 \end{multline*}
 where the last equality follows from $B_{\rho_m}=0$. 
 It yields that
 \begin{multline*}
  \E\left[ \frac{1}{2} \int_0^{\t_{n,m}} {\big(\left(A_+(L_s) - A_+(0)\right) - \left(A_-(L_s) - A_-(0)\right) \big) \,dL_s} \right] \\
     = \E\left[\left(\left(A_+(L_{\t_{n}}) - A_+(0)\right) B_{\t_{n}}^+ - \left(A_-(L_{\t_{n}}) - A_-(0)\right) B_{\t_{n}}^-\right) \mathbf{1}_{\{\t_n < \rho_m\}}\right].   
 \end{multline*}
 By the monotone convergence theorem, as $m$ tends to infinity, we obtain
 \begin{multline*}
  \E\left[ \frac{1}{2} \int_0^{\t_{n}} {\big(\left(A_+(L_s) - A_+(0)\right) - \left(A_-(L_s) - A_-(0)\right)\big) \,dL_s} \right] \\
    = ~~\E\left[\left(A_+(L_{\t_{n}}) - A_+(0)\right) B_{\t_{n}}^+ - \left(A_-(L_{\t_{n}}) - A_-(0)\right) B_{\t_{n}}^- \right].
 \end{multline*} 
 Then, as $n$ tends to infinity, the l.h.s. converges, again by the monotone convergence theorem, to 
\beaa
  \E\left[ \frac{1}{2} \int_0^{\t} {\big(\left(A_+(L_s) - A_+(0)\right) - \left(A_-(L_s) - A_-(0)\right)\big) \,dL_s} \right].
\eeaa
 As for the r.h.s., using the fact that $A_{\pm}$ are bounded and $(B^{\pm}_{t\wedge\t})_{t\geq 0}$ are uniformly integrable, it converges to 
\beaa
  \E\left[\left(A_+(L_{\t}) - A_+(0)\right) B_{\t}^+ - \left(A_-(L_{\t}) - A_-(0)\right) B_{\t}^- \right]& <&  \infty.
\eeaa
Hence, we obtain
\beaa
  \E\left[ \frac{1}{2} \int_0^{\t} {\big(A_+(L_s) - A_-(L_s)\big) \,dL_s} \right] &=&
     \E\left[A_+(L_{\t}) B_{\t}^+ - A_-(L_{\t}) B_{\t}^- \right],
\eeaa
 where both sides are finite. This ends the proof.\hfill\quad\qed
\end{proof}

\subsection{Robust Subhedging Problem}\label{sec:onemarginal-sub}

In this section, we address the robust subhedging problem. Namely, we derive the lower bound to the no-arbitrage interval and the corresponding optimal subhedging strategy. These results are new to the literature. The idea is to proceed along the lines of Section~\ref{sec:onemarginal-sup}, but to reverse the monotonicity assumption on the functions $\p_+$ and $\p_-$.

\begin{assumption}\label{hyp:onemarginal-phi2}
 $\p_+:]0,\infty[~ \longrightarrow~ ]0,\infty[$ (resp. $\p_-:]0,\infty[~\longrightarrow~ ]-\infty,0[$) is right-continuous and nonincreasing (resp. nondecreasing).
\end{assumption}

\no As in Section~\ref{sec:robusthedging}, we denote by $\psi_\pm$ the right-continuous inverses of $\p_\pm$ and
\begin{eqnarray*}
  \g(l)&:=&\frac{1}{2} \int_0^l\left(\frac{1}{\p_+(m)}-\frac{1}{\p_-(m)}\right) \,dm, \quad \mbox{for all } l>0.
\end{eqnarray*} 
We also define the new functions $A_\pm:\R_+\to\R$  via $(\p_+,\p_-)$ by
\begin{eqnarray}
 A_\pm(l) & := & \pm F'(\infty) - \int_l^{\infty} {\frac{dz}{\p_\pm(z)}} e^{\g(z)}\int_z^{\infty} {e^{-\g(m)} \,F''(dm)}. \label{def:A_sub}
\end{eqnarray}

\subsubsection{{\rm Quasi-Sure Inequality}}

We start by showing the quasi-sure inequality corresponding to the subhedging problem. Together with the second solution provided by Vallois to the SEP, it leads to the solution of the robust subhedging problem. 

\begin{proposition}\label{prop:pathineqsub}
\no Under Assumptions \ref{hyp:F} and \ref{hyp:onemarginal-phi2}, the following inequality holds
\begin{eqnarray}\label{eq:pathineqsub} 
  \int_0^T \D_t \,dX_t + H(X_T) &\le& F(L_T),\quad \P-\mbox{a.s. for all } \P\in\Pc,
\end{eqnarray}
where
\begin{align}
 \D_t & :=   A_+(L_t) \mathbf{1}_{\{X_t > 0\}} - A_-(L_t) \mathbf{1}_{\{X_t\leq 0\}}, & \text{for all } t\in [0,T], \label{eq:delta1_sub}\\
  H(\pm x) & :=  H(0) + \int_0^{\pm x} {A_\pm\left(\psi_\pm (y)\right)\, dy}, & \text{for all } x\geq 0, \label{def:H_sub}\\
  H(0) & :=  F(0) - \int_0^{\infty}{e^{\g(z)} \int_z^{\infty} {e^{-\g(m)} \,F''(dm)} \,dz}. & \label{def:H(0)_sub}
\end{align}
\end{proposition}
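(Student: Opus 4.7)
I would prove Proposition \ref{prop:pathineqsub} by following the exact three-step structure of the proof of Proposition \ref{prop:pathineqsur}, reversing the inequalities at the crucial juncture so that the construction yields a subhedging lower bound. The modifications are driven entirely by the reversal of monotonicity in Assumption \ref{hyp:onemarginal-phi2} and by the rewritten definition \eqref{def:A_sub} of $A_\pm$.

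\textbf{Step 1 (technical lemma).} I would first establish the analog of Lemma \ref{lem:technic}. The Lipschitz assumption on $F$ ensures $F'(\infty)<\infty$, and combining this with the non-negativity of $F''$ and with the signs of $1/\p_\pm$ yields $|A_\pm(l)|\le \|F'\|_\infty$. Next, applying Fubini-Tonelli to \eqref{def:A_sub} and using $\g'(z)=\tfrac{1}{2}(1/\p_+(z)-1/\p_-(z))$, one finds that the boundary contribution at infinity cancels against $2F'(\infty)$ and one recovers
\begin{equation*}
\tfrac{1}{2}\bigl(A_+(l) - A_-(l)\bigr) \;=\; F'(l) + e^{\g(l)} \int_l^\infty e^{-\g(m)} F''(dm),
\end{equation*}
which is formally identical to \eqref{eq:Hrel1}. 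Then, by the change of variable $y=\p_\pm(m)$ in \eqref{def:H_sub}, integration by parts as in the derivation of \eqref{eq:Hrel3}, and the precise choice \eqref{def:H(0)_sub} of $H(0)$, one obtains $H(\p_+(l))-A_+(l)\p_+(l)=H(\p_-(l))-A_-(l)\p_-(l)$.

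\textbf{Step 2 (pointwise bound).} I would introduce the auxiliary function
\begin{equation*}
u(x,l) \;:=\; -A_+(l) x^+ + A_-(l) x^- + A_+(l)\p_+(l) - H(\p_+(l)) + F(l),
\end{equation*}
and prove that $u(x,l)\le F(l)-H(x)$ for all $(x,l)\in\R\times\R_+$. The key new observation is that, although $A_+$ is still nondecreasing in $l$ (its derivative remains $\tfrac{1}{\p_+(l)}e^{\g(l)}\int_l^\infty e^{-\g(m)} F''(dm)\ge 0$), the inverses $\psi_\pm$ are now monotone in the opposite direction to before. Consequently, $H'(x)=A_+(\psi_+(x))$ becomes nonincreasing on $\R_+$ and the derivative on $\R_-$ is likewise nonincreasing, so $H$ is \emph{concave} on each half-line. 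The tangent-line inequality therefore reverses: $H(x)\le H(\p_+(l))+A_+(l)(x-\p_+(l))$ for $x\ge 0$ and symmetrically for $x\le 0$. Reconciling the two half-lines by means of the second identity of Step 1 yields $u(x,l)\le F(l)-H(x)$ everywhere.

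\textbf{Step 3 (It\^o-Tanaka and conclusion).} The identity $u(X_T,L_T)=\int_0^T \D_t\,dX_t$, quasi-surely, follows from exactly the It\^o-Tanaka computation carried out in Part~(ii) of the proof of Proposition \ref{prop:pathineqsur}: that step invokes $A_\pm$ only through the first identity of Step~1, which has the same form here, so the computation transfers verbatim. Combining Steps~2 and~3 gives
\begin{equation*}
\int_0^T \D_t\,dX_t \;=\; u(X_T,L_T) \;\le\; F(L_T)-H(X_T),
\end{equation*}
which is exactly \eqref{eq:pathineqsub}. The main technical obstacle lies in Step~1: the shift of integration from $\int_0^l$ in \eqref{def:A} to $\int_l^\infty$ in \eqref{def:A_sub} requires careful bookkeeping of boundary terms at infinity, and one must verify that \eqref{def:H(0)_sub} is precisely the constant of integration that makes the second identity hold simultaneously on both half-lines.
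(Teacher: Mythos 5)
Your proposal matches the paper's proof essentially line for line: the paper likewise first re-derives the two identities (the analogues of \eqref{eq:Hrel1}--\eqref{eq:Hrel2}) by Fubini--Tonelli and integration by parts with the constant $H(0)$ pinned down by $H(0)=H(\p_+(\infty))-A_+(\infty)\p_+(\infty)$, then observes that $H$ is now concave on each half-line so the supporting-line inequality reverses, and finally transfers the It\^o--Tanaka identity verbatim from Proposition~\ref{prop:pathineqsur}. Your writeup merely makes explicit what the paper compresses into ``the rest of the proof follows by repeating the arguments.''
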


\begin{corollary}\label{prop:lowerbound}
 Under Assumptions \ref{hyp:F} and \ref{hyp:onemarginal-phi2}, one has for any centered probability measure $\mu$,
 \begin{eqnarray*}
  I^{\mu}(F(L_T)) \geq D^{\mu}(F(L_T)) \geq \mu(H).
 \end{eqnarray*}
\end{corollary}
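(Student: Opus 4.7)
The plan is to mirror the argument of Corollary~\ref{prop:upper bound} with reversed inequalities. The first inequality $I^{\mu}(F(L_T)) \geq D^{\mu}(F(L_T))$ is the pricing--hedging inequality already discussed in Section~\ref{sec:robusthedging}: for any subhedging pair $(\D, H) \in \underline{\Hc}^{\mu} \times L^1(\mu)$ with initial wealth $Y_0$, taking expectation under an arbitrary $\P \in \Pc^{\mu}$ in $Y_0 + \int_0^T \D_t\,dX_t - \mu(H) + H(X_T) \leq F(L_T)$, and using both $\E^{\P}[H(X_T)] = \mu(H)$ and the $\P$-submartingale property of $Y^{\D}$, gives $Y_0 \leq \E^{\P}[F(L_T)]$; passing to the infimum over $\P$ and the supremum over $Y_0$ yields the claim.

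For the lower bound $D^{\mu}(F(L_T)) \geq \mu(H)$, in view of Proposition~\ref{prop:pathineqsub} it suffices to verify that the pair $(\D, H)$ defined by~\eqref{eq:delta1_sub}--\eqref{def:H(0)_sub} belongs to $\underline{\Hc}^{\mu} \times L^1(\mu)$. I would first establish the analog of Lemma~\ref{lem:technic} for the new formula~\eqref{def:A_sub}. Rewriting $\g'(z) = \tfrac{1}{2}\bigl(\p_+(z)^{-1} - \p_-(z)^{-1}\bigr)$ and applying Fubini--Tonelli to compute
\begin{equation*}
 \int_l^\infty \g'(z)\, e^{\g(z)} \int_z^\infty e^{-\g(m)}\, F''(dm)\, dz = F'(\infty) - F'(l) - e^{\g(l)} \int_l^\infty e^{-\g(m)}\, F''(dm),
\end{equation*}
one checks that relation~\eqref{eq:Hrel1} continues to hold. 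Convergence of the improper integrals defining $A_\pm$ is a consequence of $\g'(z) \geq \tfrac{1}{2\p_+(z)}$ and $\g'(z) \geq -\tfrac{1}{2\p_-(z)}$, which yields $\frac{e^{\g(z)}}{|\p_\pm(z)|} \leq 2 (e^{\g(z)})'$. Combined with $A_\pm(\infty) = \pm F'(\infty)$ and the monotonicities of $A_\pm$ in $l$, the same argument as in Lemma~\ref{lem:technic}(ii) delivers $\|A_\pm\|_\infty \leq 3\|F'\|_\infty$. Hence $H'$ is bounded and $\mu(|H|) < \infty$ since $\mu$ has finite first moment.

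To verify $\D \in \underline{\Hc}^{\mu}$, I would develop the proof of Proposition~\ref{prop:pathineqsub} along the lines of Proposition~\ref{prop:pathineqsur}, producing a function $u^{\mathrm{sub}}\colon\R\times\R_+\to\R$ such that $u^{\mathrm{sub}}(x,l) \leq F(l) - H(x)$ pointwise and $u^{\mathrm{sub}}(X_t, L_t) = \int_0^t \D_s\,dX_s$ quasi-surely at every $t \in [0,T]$ (the latter identity coming from the It\^o--Tanaka computation of part~(ii) in the proof of Proposition~\ref{prop:pathineqsur}, adapted to the new $A_\pm$). This gives the quasi-sure pointwise bound $\int_0^t \D_s\,dX_s \leq F(L_t) - H(X_t) \leq C(1 + L_t + |X_t|)$ on $[0,T]$, from which the $\P$-submartingale property of $\bigl(\int_0^t \D_s\,dX_s\bigr)_{t\in[0,T]}$ follows by a Fatou-type argument applied to its negative, exactly analogous to~\eqref{eq:supermartingale}, using that $(L_t + |X_t|)_{t\in[0,T]}$ is a non-negative submartingale uniformly bounded in $L^1$ under any $\P \in \Pc^{\mu}$.

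The step I expect to be the main obstacle is securing the \emph{dynamic} form of the pathwise inequality underlying Proposition~\ref{prop:pathineqsub}, i.e., the pointwise upper bound on $\int_0^t \D_s\,dX_s$ uniformly in $t \in [0,T]$ and not merely at the terminal time $T$: this is precisely what allows the Fatou argument to produce a submartingale, as opposed to only an inequality at maturity. Once this dynamic identity is in hand, the remainder is a mechanical reversal of signs from the superhedging argument.
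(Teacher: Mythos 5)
Your proof is correct and follows essentially the same approach as the paper: the paper's own proof of Corollary~\ref{prop:lowerbound} is literally declared ``identical to the proof of Corollary~\ref{prop:upper bound},'' namely boundedness of the new $A_\pm$ (via the sub-case analogue of Lemma~\ref{lem:technic}) yields $H\in L^1(\mu)$ and $\Delta\in\Hc^2$, and a Fatou argument applied to the non-negative process $N_t-\int_0^t\Delta_s\,dX_s$ delivers the submartingale property. The ``dynamic'' bound you flag as the main obstacle is indeed needed, but it is implicit in the paper exactly as you suggest: part~(ii) of the proof of Proposition~\ref{prop:pathineqsur} (respectively \ref{prop:pathineqsub}) establishes the identity $u(X_t,L_t)=\int_0^t\Delta_s\,dX_s$ for every $t$, not merely at $T$, and the paper's Corollary~\ref{prop:upper bound} already relies on this time-$t$ version when writing $\int_0^t\Delta_s\,dX_s\ge -C(L_t+|X_t|)$ for all $t\in[0,T]$.
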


\no The proof of Corollary~\ref{prop:lowerbound} is identical to the proof of Corollary~\ref{prop:upper bound}. However, the proof of the quasi-sure inequality~\eqref{eq:pathineqsub} is not completely straightforward and thus we provide some details below.

\begin{proof}[of Proposition~\ref{prop:pathineqsub}] 
Let us show first that once again we have 
\beaa
  \frac{1}{2}\left(A_+(l) - A_-(l)\right) &=& F'(l) + e^{\g(l)} \int_l^{\infty} {e^{-\g(m)} \,F''(dm)}, \label{eq:Hrel1_sub}\\
  H(\p_+(l)) - A_+(l)\p_+(l)  &=& H(\p_-(l)) - A_-(l)\p_-(l). \label{eq:Hrel2_sub}
\eeaa
The first identity follows from Fubini-Tonelli's theorem as was~\eqref{eq:Hrel1} in Lemma~\ref{lem:technic}. As for the second one, by change of variables and integration by parts, we have
\begin{multline*}
 H(\p_+(\infty)) - H(\p_+(l))\\
 \begin{aligned}
    & = \int_{]l,\infty[} {A_+(m) \,\p_+'(dm)} \\  
    & = A_+(\infty) \p_+(\infty) - A_+(l) \p_+(l)  - \int_l^{\infty} e^{\g(z)}\int_z^{\infty} {e^{-\g(m)} \,F''(dm)} \,dy. 
 \end{aligned}
\end{multline*}
Using further $H(0)=H(\p_+(\infty)) - A_+(\infty) \p_+(\infty)$, we obtain
\beaa
  H(\p_+(l)) - A_+(l)\p_+(l) &=& F(0) - \int_0^{l} {e^{\g(z)}\int_z^{\infty} {e^{-\g(m)} F''(dm)}\, dy}.
\eeaa
Similarly, we can show that $H(\p_-(l)) - A_-(l)\p_-(l)$ coincides with the r.h.s. above. 
The rest of the proof follows by repeating the arguments of Proposition~\ref{prop:pathineqsur} using the fact that the restriction of $H$ to $\R_+$ (resp. $\R_-$) is now concave. \hfill\quad\qed
\end{proof}

\subsubsection{{\rm Optimality and Duality}}\label{sec:lowerbound}
Using another solution to the SEP provided by Vallois, we derive the duality and provide optimizers for both $D^{\mu}(F(L_T))$ and $I^{\mu}(F(L_T))$.

\begin{proposition}
 Under Assumption~\ref{hyp:mu}, there exists a pair $(\p^\m_+,\p^\m_-)$ satisfying Assumption~\ref{hyp:onemarginal-phi2} such that $B^{\t^\m}= (B_{\t^\m\wedge t})_{t\ge 0}$ is uniformly integrable and $B_{\t^\m} \sim\m$, where
\begin{eqnarray*}
 \t^\m& :=& \inf~ \big\{t>0:\, B_t\notin ]\p^\m_-(L^B_t), \p^\m_+(L^B_t)[\big\}.
\end{eqnarray*}
\end{proposition}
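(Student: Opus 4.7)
My plan is to construct $(\p^\m_+, \p^\m_-)$ along the lines of Vallois' original construction, but with reversed monotonicity: the window $]\p^\m_-(L^B_t), \p^\m_+(L^B_t)[$ now starts wide, equal to the interior of $\mathrm{supp}(\m)$, and shrinks as local time accumulates. The analytic input remains It\^o's excursion theory: under the excursion measure, positive Brownian excursions reach height $a > 0$ at rate $1/(2a)$ per unit of local time, and symmetrically for negative excursions.

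\emph{Step 1.} I would first derive formally the ODE that $(\p^\m_+, \p^\m_-)$ must satisfy. With $\g$ defined as in Assumption~\ref{hyp:onemarginal-phi}, excursion theory gives $\P(L^B_{\t^\m} > l) = e^{-\g(l)}$, and conditional on exit occurring at local time $l$ the exit position is $\p_+(l)$ with probability $-\p_-(l)/(\p_+(l) - \p_-(l))$. Since $\p^\m_+$ is now non-increasing, the event $\{B_{\t^\m}\ge \p^\m_+(l)\}$ coincides with ``exit occurred through the positive threshold at some earlier local time $m \le l$'', which gives the identity
\beaa
 \m\big([\p^\m_+(l),\infty[\big) &=& \int_0^l \frac{e^{-\g(m)}}{2\,\p^\m_+(m)} \,dm,
\eeaa
together with a symmetric identity on $]-\infty, \p^\m_-(l)]$. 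Summing the two yields $1 - e^{-\g(l)} = \m\big(\R\setminus ]\p^\m_-(l),\p^\m_+(l)[\big)$, and differentiation produces the coupled ODE (assuming for simplicity that $\m$ has a positive density):
\beaa
 d\p^\m_+(l) &=& -\,\frac{\m(\,]\p^\m_-(l),\p^\m_+(l)[\,)}{2\,\p^\m_+(l)\,\m(\p^\m_+(l))} \,dl, \\
 d\p^\m_-(l) &=& \phantom{-\,}\frac{\m(\,]\p^\m_-(l),\p^\m_+(l)[\,)}{2\,\p^\m_-(l)\,\m(\p^\m_-(l))} \,dl,
\eeaa
with initial condition $\p^\m_+(0+) = \sup\mathrm{supp}(\m)$ and $\p^\m_-(0+) = \inf\mathrm{supp}(\m)$. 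The signs ensure the correct monotonicity of Assumption~\ref{hyp:onemarginal-phi2}, reversed compared with the Vallois embedding recalled in Proposition~\ref{prop:vallois}.

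\emph{Step 2.} I would then show that the above system admits a right-continuous solution on $]0,\infty[$ taking values in the prescribed half-lines and such that $\g(\infty) = \infty$. When $\m$ is atomless with smooth positive density this is standard Picard--Lindel\"of; in the general case the ODE must be reformulated as a pair of coupled integral equations, with $\p^\m_\pm$ constructed as right-continuous monotone envelopes, exactly along the lines of~\cite{vallois-83}. Once existence is secured, the identity $B_{\t^\m}\sim\m$ is built into the construction, the finiteness $\t^\m<\infty$ a.s. follows from $e^{-\g(l)} \to 0$, and uniform integrability of $B^{\t^\m}$ is obtained (as in Lemma~\ref{lem:optimality2}) from $\int_{\R}|x|\,d\m(x)<\infty$ by localising at the stopping times $\sigma_n := \inf\{t:\,|B_t|\ge n\}$ and $\rho_n:= \inf\{t:\,L^B_t\ge n\}$ and invoking dominated convergence on the bounded martingale $B^{\t^\m\wedge\sigma_n\wedge\rho_n}$.

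I expect the main obstacle to lie in Step~2, specifically the behaviour near $l=0+$: the right-hand side of the ODE is typically singular there because the initial window may be all of $\R$ and $\m$ may have vanishing density at the endpoints of its support. This singularity is precisely the technical heart of the original construction in~\cite{vallois-83}, whose arguments adapt to the present decreasing case with only cosmetic modifications, so in the write-up I would simply refer to that paper (and to~\cite{cox-hobson-obloj-08} where the second Vallois solution is also discussed).
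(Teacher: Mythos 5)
The paper's proof of this proposition is simply a citation to Vallois~\cite{vallois-92}. Your proposal instead sketches the construction underlying that reference --- deriving the ODE for $(\p^\m_+,\p^\m_-)$ via excursion theory and then pointing to~\cite{vallois-83,vallois-92} for the technical existence arguments --- so it is compatible with, and considerably more explicit than, the paper's one-line proof. Step~1 is correct: the survival formula $\P(L^B_{\t^\m}>l)=e^{-\g(l)}$, the exit-side probability $-\p^\m_-(l)/(\p^\m_+(l)-\p^\m_-(l))$, the monotonicity-reversed identity $\m([\p^\m_+(l),\infty[)=\int_0^l e^{-\g(m)}/(2\p^\m_+(m))\,dm$, and the resulting ODE system are exactly the decreasing-case analogues of what the paper records for the increasing case in Remark~\ref{symmetric}.

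The one part that does not hold as written is the uniform-integrability claim at the end of Step~2. Invoking ``dominated convergence on the bounded martingale $B^{\t^\m\wedge\sigma_n\wedge\rho_n}$'' does not by itself give uniform integrability of $B^{\t^\m}$. More importantly, the pathwise bound $|B_{t\wedge\t^\m}|\le|B_{\t^\m}|$, which the paper does use in Step~3 of the proof of Theorem~\ref{thm:vallois2} to obtain uniform integrability for the \emph{increasing} Vallois embedding, fails here: since $\p^\m_+$ is non-increasing and $\p^\m_-$ is non-decreasing, the admissible window shrinks as local time grows, so before exit the path can visit values strictly larger in modulus than the eventual exit point $B_{\t^\m}$. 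Uniform integrability of $B^{\t^\m}$ (equivalently, minimality of $\t^\m$ in the sense of Monroe) for the decreasing embedding is a genuine technical point established in Vallois~\cite{vallois-92}; since you ultimately defer to that paper the gap is not fatal, but the dominated-convergence sentence should be dropped in favour of a direct appeal to~\cite{vallois-92} rather than presented as a self-contained argument.
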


\begin{proof}
We refer to Vallois~\cite{vallois-92} for a proof.\hfill\quad\qed
\end{proof}

\begin{theorem}\label{thm:optimality_sub}
Under Assumptions~\ref{hyp:F} and \ref{hyp:mu}, there is no duality gap, i.e.,
\beaa
  I^{\mu}(F(L_T)) = D^{\mu}(F(L_T)) = \mu(H^\m),
\eeaa
 where $H^\m$ is constructed by~\eqref{def:H_sub}--\eqref{def:H(0)_sub} from $(\p_+^\m,\p_-^\m)$. In addition, there exists an optimizer $\P^\m$ for  $I^{\mu}(F(L_T))$ such that
 \beaa
  \int_0^T \D^\m_t\, dX_t + H^\m(X_T) &=& F(L_T),\quad\P^\m-\mbox{a.s.},
\eeaa
 where the process $\D^\m$ is given by \eqref{eq:delta1_sub} with $(\p_+^\m,\p_-^\m)$.
\end{theorem}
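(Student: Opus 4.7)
The plan is to mirror step by step the proof of Theorem~\ref{thm:optimality}, replacing the first Vallois embedding by the second one provided by the preceding proposition, and using the subhedging inequality~\eqref{eq:pathineqsub} instead of~\eqref{eq:pathineqsur}. By Corollary~\ref{prop:lowerbound} applied to the pair $(\p^\m_+,\p^\m_-)$ supplied by Vallois' second construction, it already holds that $\mu(H^\m) \leq D^{\mu}(F(L_T)) \leq I^{\mu}(F(L_T))$. So it suffices to build a calibrated model $\P^\m\in\Pc^\m$ such that $\E^{\P^\m}[F(L_T)] = \mu(H^\m)$, which will simultaneously provide the primal optimizer and close the duality gap.

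The candidate for $\P^\m$ is again the pushforward by the time change $t\mapsto t/(T-t)$ of the stopped Brownian motion $(B_{t\wedge\t^\m})_{t\geq 0}$, i.e., the law of $Z_t := B_{\t^\m\wedge t/(T-t)}$. As in Part~(i) of the proof of Theorem~\ref{thm:optimality}, uniform integrability of $B^{\t^\m}$ together with $B_{\t^\m}\sim\mu$ ensures that $Z$ is a continuous martingale on $[0,T]$ with terminal law $\mu$, so $\P^\m\in\Pc^\m$. Note that the invariance of the local time under the continuous increasing time change implies that $L_T^X$ under $\P^\m$ has the same law as $L^B_{\t^\m}$, and by construction $X_T = \p^\m_+(L_T)\mathbf{1}_{\{X_T>0\}} + \p^\m_-(L_T)\mathbf{1}_{\{X_T<0\}}$, $\P^\m$-a.s.; the event $\{X_T=0\}$ is negligible since $\mu(\{0\})=0$ by Assumption~\ref{hyp:mu}.

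Next, define
\beaa
  u^\m(x,l) &:=& - A^{\m}_+(l) x^+ + A^{\m}_-(l) x^- + A^{\m}_+(l)\p^\m_+(l) - H^\m(\p^\m_+(l)) + F(l),
\eeaa
with $A^\m_\pm$ and $H^\m$ now given by~\eqref{def:A_sub} and~\eqref{def:H_sub}--\eqref{def:H(0)_sub} applied to $(\p_+^\m,\p_-^\m)$. The identity \eqref{eq:Hrel2_sub} (proved for the subhedging case in Proposition~\ref{prop:pathineqsub}) shows, exactly as in the superhedging setting, that $u^\m(X_T,L_T) = F(L_T) - H^\m(X_T)$, $\P^\m$-a.s. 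Likewise, a verbatim repetition of Part~(ii) of the proof of Proposition~\ref{prop:pathineqsur}, combining It\^o-Tanaka's formula with the relation between $A^\m_+ - A^\m_-$ and $F'$ (valid in the subhedging case as well), yields $u^\m(X_T,L_T) = \int_0^T \D^\m_t\,dX_t$, $\P^\m$-a.s. Combining these two equalities gives the claimed pathwise identity $\int_0^T \D^\m_t\,dX_t + H^\m(X_T) = F(L_T)$, $\P^\m$-a.s.

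It remains to take expectations and show $\E^{\P^\m}\bigl[\int_0^T \D^\m_t\,dX_t\bigr]=0$, which will then give $\E^{\P^\m}[F(L_T)]=\mu(H^\m)$ and close the chain of inequalities. This is the analogue of Lemma~\ref{lem:optimality2}, and constitutes the main technical step. I expect the proof of that lemma to carry over essentially unchanged: by the time-change invariance one reduces to showing $\E[M_{\t^\m}]=0$ for $M_t := \int_0^t (A^\m_+(L_s)\mathbf{1}_{\{B_s>0\}} + A^\m_-(L_s)\mathbf{1}_{\{B_s\leq 0\}})\,dB_s$, then introduces the localizing sequence $\t_{n,m} := \t^\m \wedge \sigma_n \wedge \rho_m$ and applies It\^o-Tanaka together with boundedness of $A^\m_\pm$ (which follows from \eqref{def:A_sub}, since $\|F'\|_\infty$ is finite and the integral is monotonically controlled). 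The only care required is that, with the reversed monotonicity of $\p^\m_\pm$, the stopping times $\rho_m$ play their role through the identity $B_{\rho_m}=0$ and the localization $\t_n<\rho_m$ exactly as before; the monotone convergence step applied to $\int_0^{\t^\m}(A^\m_+(L_s)-A^\m_-(L_s))\,dL_s$ still goes through because the integrand is non-negative by~\eqref{eq:Hrel1_sub} and $F'\geq F'(0)$. Once this adaptation of Lemma~\ref{lem:optimality2} is established, the equality $\E^{\P^\m}[F(L_T)]=\mu(H^\m)$ follows, which provides both the optimizer and the absence of duality gap.\hfill\quad\qed
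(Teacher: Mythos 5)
Your proposal is correct and takes essentially the same approach as the paper, which simply declares the proof ``identical to the proof of Theorem~\ref{thm:optimality}'': you construct $\P^\mu$ from the second Vallois embedding, verify the pathwise equality via $u^\mu$ and the subhedging identities, and check that the analogue of Lemma~\ref{lem:optimality2} carries over. Your explicit verification that the monotone-convergence step still works (since $A_+-A_-$ remains nondecreasing because $\gamma'>0$ in the subhedging case too) fills in the details the paper leaves implicit.
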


\no The proof of this result is identical to the proof of Theorem~\ref{thm:optimality}. 
 
\begin{remark}
The assumption that $\mu$ has no mass at zero can be dropped in Theorem~\ref{thm:optimality_sub}. In this case, $\phi_\pm$ can reach zero and we can assume w.l.o.g. that $\psi_+(0)=\psi_-(0)$. Then we need to modify slightly the definitions of $A_\pm$  by replacing the upper bound $\infty$ in the integral term by $\psi_+(0)$. 
\end{remark}

\subsection{On the Stochastic Control Approach}\label{sect:stochcontrol}
 
 As already mentioned, the results of Section~\ref{sec:onemarginal-sup} can be found in Cox, Hobson and Obl{\'o}j~\cite{cox-hobson-obloj-08}. The actual novelty here comes from our approach which is based on the stochastic control theory. In this section, we give some insights on the arguments that led us to consider the quasi-sure inequality~\eqref{eq:pathineqsur}.
 
\no We start by observing that
\begin{align*}
	P^{\mu}(F(L_T)) & = \sup_{\P\in\Pc} \inf_{H\in L^1(\mu)} {\left\{\E^{\P}\left[F(L_T) - H(X_T)\right] + \mu(H)\right\}} \\
	& \leq  \inf_{H\in L^1(\mu)} \sup_{\P\in\Pc} {\left\{\E^{\P}\left[F(L_T) - H(X_T)\right] + \mu(H)\right\}} 
\end{align*}
Further, the Dambis-Dubins-Schwarz theorem implies (formally) that
\begin{equation*}
	P^{\mu}(F(L_T)) \leq  \inf_{H\in L^1(\mu)} \sup_{\tau\in\Tc} {\left\{\E\left[F(L^B_\tau) - H(B_\tau)\right] + \mu(H) \right\}} 
\end{equation*}
 where $\Tc$ is the collection of stopping times $\tau$ such that $B^\tau$ is a uniformly integrable martingale. 
 Inspired by Galichon, Henry-Labord\`ere and Touzi~\cite{galichon-henry-touzi-14}, we study the problem on the r.h.s. above as it turns out to be equivalent to the robust superhedging problem.
 
 \no For any $H \in L^1(\mu)$, we consider the optimal stopping problem
\beaa
  u(x,l) &:=& \sup_{\tau\in\Tc}~ {\E_{x,l}\left[F(L^B_\tau) ~-~ H(B_\tau)\right]},
\eeaa
 where $\E_{x,l}$ denotes the conditional expectation operator 
$\E\left[ \cdot |B_0=x,L^B_0=l\right]$.  
Using the formal representation $d L^B_t = \delta(B_t) \,dt$ where $\delta$ denotes the Dirac delta function, the Hamilton-Jacobi-Bellman (HJB for short) equation corresponding to this optimal stopping problem reads formally as
 \bea\label{eq:pde1}
  \max{\left(F - H - v,\, \frac{1}{2}\partial_{xx}{v} + \d(x)\partial_{l}{v}\right)} &=& 0. 
\eea
 We look for a solution of the form 
\beaa
  v(x,l) &=&
  \begin{cases}
   a(l) x^+ + b(l) x^- + c(l), & \text{if}~(x,l)\in\Dc,\\
   F(l) - H(x), & \text{otherwise},
  \end{cases}
\eeaa
 where $\Dc:=\{(x,l);\, x\in ]\p_-(l),\p_+(l)[\}$ with $\p_\pm$ satisfying Assumption~\ref{hyp:onemarginal-phi}. 
For the sake of simplicity, we assume that $\p_\pm$ are strictly monotone such that $\p_\pm(0)=0$ and $\p_\pm(\infty)=\pm\infty$ and that all the functions involved are smooth enough to allow the calculation sketched below. Differentiating twice w.r.t. to $x$ creates a delta function $\delta(x)$ which cancels out the delta function appearing in the term $\d(x)\partial_{l}{v}$ provided that $a - b = -2c'$. 
 Then we impose the continuity and the smooth fit conditions at the boundary $\partial\Dc$:
 \begin{eqnarray*}
   v(\p_\pm(l),l)=F(l)-H(\p_\pm(l)) &~\mbox{and}~& \partial_x v(\p_\pm(l),l)=-H'(\p_\pm(l)).
 \end{eqnarray*}
 We deduce that
\begin{multline*}
 v(x,l) = - H'(\p_+(l)) x^+ + H'(\p_-(l))x^- \\
    + H'(\p_+(l))\p_+(l) - H(\p_+(l)) +  F(l),\quad \text{for all }(x,l)\in\Dc.
\end{multline*}
 In addition, the function $H$ has to satisfy the following system of ODEs:
 \begin{eqnarray*}
   H'(\p_+) \p_+ - H(\p_+) &=& H'(\p_-) \p_- - H(\p_-), \\ 
   \frac{1}{2} \big(H'(\p_+) - H'(\p_-)\big) &=& F' + H''(\p_+)\p'_+\p_+.
 \end{eqnarray*}
 This system of ODE can be solved explicitly and it characterizes $H'\circ\p_\pm$ as in~\eqref{def:A} up to a constant such that
 \beaa
  \frac{1}{2} \left(H'(0+) - H'(0-)\right) &=& F'(0) + \int_0^{\infty} {e^{-\g(m)} F''(m) \, dm}.
 \eeaa
 Thus $H$ is given by~\eqref{def:H} if we pick $H(0)=F(0)$. 
Conversely, provided that $F$ is convex, $H$ is also convex on $\R_+$ (resp. $\R_-$). It follows that $v$ satisfies the variational inequality~\eqref{eq:pde1} in view of Part~(i) of the proof of Proposition~\ref{prop:upper bound}. Further, we observe that another solution of \eqref{eq:pde1} is given by
\begin{multline*}
 {u}(x,l) := 
   - H'(\p_+(l)) x^+ + H'(\p_-(l))x^- \\
   + H'(\p_+(l))\p_+(l) - H(\p_+(l)) +  F(l),\quad \text{for all }(x,l)\in\R\times\R_+.
\end{multline*}

\no Then, given any solution $w$ to the variational PDE (\ref{eq:pde1}), it is straightforward to derive heuristically a quasi-sure inequality. Indeed, using the formal representation $d L_t = \delta(X_t) \,d\langle X\rangle_t$, It\^o's formula yields for all $\P\in\Pc$,
\beaa
  F(L_T) - H(X_T)  \leq w\left(X_T,L_T\right) \leq  \int_0^T {\partial_x{w}\left(X_s,L_s\right) \,dX_s}, \quad \P-\mathrm{a.s.}
\eeaa
 In particular, if $w$ coincides with $u$, we recover the quasi-sure inequality~\eqref{eq:pathineq}. Further, if we denote by $\P^*$ the distribution of $(B_{\tau\wedge\frac{t}{T-t}})_{t\in[0,T]}$ where 
\beaa
  \tau &:=& \inf\left\{ t\geq 0:\, B_t\notin \big]\p_-(L^B_t), \p_+(L^B_t)\big[\right\},
\eeaa
  we obtain
 \begin{equation*}
  F(L_T) - H(X_T) = u(X_T, L_T) = \int_0^T {\partial_x{u}\left(X_s,L_s\right) \,dX_s},\quad \P^*-\mathrm{a.s.}
 \end{equation*}

\section{Two-Marginal Skorokhod Embedding Problem}\label{sec:two-marginal}\label{sec:embedding2}

\no In this section, we provide a new solution to the two-marginal SEP as an extension of the Vallois embedding. Let $\bu=(\mu_1,\mu_2)$ be a centered peacock, i.e., $\mu_1$ and $\mu_2$ are centered probability distributions such that $\mu_1(f)\leq\mu_2(f)$ for all $f:\R\to\R$ convex. We aim to construct a pair of stopping rules based on the local time such that $\t^{\bu}_1\le \t^{\bu}_2$, $B_{\tau^{\bu}_1}\sim \mu_1$, $B_{\tau^{\bu}_2}\sim \mu_2$ and $B^{\t^{\bu}_2}$ is uniformly integrable. A natural idea is to take the Vallois embeddings corresponding to $\mu_1$ and $\mu_2$. 
However these stopping times are not ordered in general and thus we need to be more careful. For technical reasons, we make the following assumption on the marginals.
 
\begin{assumption}\label{hyp:marginal2}
 $\bu=(\mu_1,\mu_2)$ is a centered peacock such that $\mu_1$ and $\mu_2$ are symmetric and equivalent to the Lebesgue measure.
\end{assumption}

\subsection{Construction}\label{sec:embedding2-main}

For the first stopping time, we take the solution given by Vallois~\cite{vallois-83} that embeds $\mu_1$, i.e., 
\begin{equation*}
 \tau^{\bu}_1 := \inf \Big{\{t > 0: \left|B_t\right|\geq \phi^{\bu}_1(L_t)\Big\}},
\end{equation*}
where $\phi^{\bu}_1:\R_+\to\R_+$ is the inverse of
\begin{eqnarray}
 \psi^{\bu}_1(x):=\int_0 ^x {\frac{y \mu_1(y)}{\mu_1([y,\infty[)}\, dy}, \quad\text{for all }x\geq0.\label{eq:symmetriceq} 
\end{eqnarray}
For the second stopping time,  
we look for an increasing function $\phi^{\bu}_2:\R_+\to\R_+$ such that
\begin{eqnarray*}
 \tau^{\bu}_2:=\inf\Big\{ t \geq\tau^{\bu}_1 \; : \; |B_t | \geq \phi^{\bu}_2(L_t) \Big\}.
\end{eqnarray*}
Notice that $\t_1^{\bu}\leq\t_2^{\bu}$ by definition. In particular, if $\phi^{\bu}_2< \phi^{\bu}_1$ on a non-empty interval, it can happen that $|B_t | \geq \phi^{\bu}_2(L_t)$ for some $t<\tau^{\bu}_1$.
As before, $\p^{\bu}_2$ is defined through its inverse $\psi^{\bu}_2$. Let us denote
\begin{align*}
 x_1 &:= \inf{\left\{x>0:\, \int_0^x {\frac{y \mu_2(y)}{\mu_2([y,\infty[)}\,dy} > \psi^{\bu}_{1}(x)\right\}}.
\end{align*}
Then we set
\begin{equation}\label{eq:psi20}
 \psi^{\bu}_{2}(x) = \int_0^x {\frac{y \mu_2(y)}{\mu_2([y,\infty[)}\,dy}, \quad \text{for all } x \in [0,x_1].
\end{equation}
To ensure that $x_1>0$, we need to assume that $\delta\mu:=\mu_2-\mu_1\leq 0$ on a neighborhood of zero. 
If $x_1=\infty$, the construction is over. This corresponds to the case when the Vallois embeddings are well-ordered. Otherwise, we proceed by induction as follows:

\no\rmi if $x_{2i-1}<\infty$, we denote 
\begin{align*}
 x_{2i} &:= \inf{\left\{x>x_{2i-1}:\, \psi^{\bu}_2(x_{2i-1}) + \int_{x_{2i-1}}^x {\frac{y \delta\mu(y)}{\delta\mu([y,\infty[)} \,dy} < \psi^{\bu}_{1}(x)\right\}}.
\end{align*}
Then we set for all $x\in ]x_{2i-1}, x_{2i}]$,
  \begin{equation}
     \psi^{\bu}_{2}(x) = \psi^{\bu}_2(x_{2i-1}) + \int_{x_{2i-1}}^x {\frac{y \delta\mu(y)}{\delta\mu([y,\infty[)} \,dy}; \label{eq:psi2a}
  \end{equation}
  
\no\rmii if $x_{2i}<\infty$, we denote 
\begin{align*}
 x_{2i+1} &:= \inf{\left\{x>x_{2i}:\, \psi^{\bu}_2(x_{2i}) + \int_{x_{2i}}^x {\frac{y \mu_2(y)}{\mu_2([y,\infty[)} \,dy} > \psi^{\bu}_{1}(x)\right\}}.
\end{align*}
Then we set for all $x\in ]x_{2i}, x_{2i+1}]$,
  \begin{equation}
     \psi^{\bu}_{2}(x) = \psi^{\bu}_2(x_{2i}) + \int_{x_{2i}}^x {\frac{y \mu_2(y)}{\mu_2([y,\infty[)} \,dy}. \label{eq:psi2b}
  \end{equation}
  
\no To ensure that $\psi_2^{\bu}$ is well-defined and increasing, we need to make the following assumption. In particular, the point (iii) below ensures that $x_i<x_{i+1}$ and $\lim_{i\to\infty}{x_i}=\infty$. 

\begin{assumption}\label{hyp:embedding}
 \rmi $\delta\mu\leq 0$ and $\delta\mu\not\equiv 0$ on a neighborhood of zero;\\
 \rmii $\delta \mu > 0$ whenever $\psi^{\bu}_1<\psi^{\bu}_2$;\\
 \rmiii $x_i= \infty$ for some $i\geq1$.
\end{assumption}

\begin{theorem}\label{thm:vallois2}
 Under Assumptions \ref{hyp:marginal2} and \ref{hyp:embedding}, if $\psi^{\bu}_2$ is given by~\eqref{eq:psi20}--\eqref{eq:psi2b}, then $B^{\t^{\bu}_2}$ is uniformly integrable and $B_{\t^{\bu}_2} \sim \mu_2$.
\end{theorem}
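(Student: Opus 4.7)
The plan is to reduce the claim to an exit-time analysis via the strong Markov property at $\t^{\bu}_1$, and then identify the law of $B_{\t^{\bu}_2}$ piecewise in space, matching it against the one-marginal Vallois identity on each piece. First, I verify well-posedness of the construction. Under Assumption~\ref{hyp:embedding} and the equivalence of $\mu_1,\mu_2$ with Lebesgue measure, the integrands in \eqref{eq:psi20}--\eqref{eq:psi2b} are strictly positive on the relevant ranges, so $\psi^{\bu}_2$ is continuous and strictly increasing on each piece; the recursive definition glues continuously at the $x_i$'s, and point (iii) of Assumption~\ref{hyp:embedding} together with $\psi^{\bu}_1(\infty)=\infty$ forces $\psi^{\bu}_2(\infty)=\infty$. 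Hence $\p^{\bu}_2:=(\psi^{\bu}_2)^{-1}$ is well-defined and nondecreasing on $\R_+$, so $\t^{\bu}_2$ is a stopping time with $\t^{\bu}_1\le\t^{\bu}_2$.

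Next, I invoke Proposition~\ref{prop:vallois} for $\t^{\bu}_1$, which gives that $B^{\t^{\bu}_1}$ is uniformly integrable and $B_{\t^{\bu}_1}\sim\mu_1$. Since $|B_{\t^{\bu}_1}|=\p^{\bu}_1(L_{\t^{\bu}_1})$ and $\mu_1$ is symmetric, this determines the joint law of $(L_{\t^{\bu}_1},B_{\t^{\bu}_1})$. I then split the analysis by whether $L_{\t^{\bu}_1}$ lands in a local-time range where $\p^{\bu}_2\le\p^{\bu}_1$ or $\p^{\bu}_2>\p^{\bu}_1$. In the first case, corresponding to a $\delta\mu$-interval $[x_{2i-1},x_{2i}]$ in space, we already have $|B_{\t^{\bu}_1}|\ge\p^{\bu}_2(L_{\t^{\bu}_1})$, so $\t^{\bu}_2=\t^{\bu}_1$ and the $\mu_1$-mass on $[x_{2i-1},x_{2i}]$ is transported untouched to $B_{\t^{\bu}_2}$. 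In the second case, corresponding to a continuation interval $[x_{2i},x_{2i+1}]$, the strong Markov property reduces $\t^{\bu}_2-\t^{\bu}_1$ to the first exit time of a Brownian motion starting at $B_{\t^{\bu}_1}$, with local time offset by $L_{\t^{\bu}_1}$, from the region $\{|x|<\p^{\bu}_2(\ell)\}$.

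The third and most intricate step is identifying the density of $|B_{\t^{\bu}_2}|$ piecewise. On the continuation intervals $[x_{2i},x_{2i+1}]$, formulas \eqref{eq:psi20} and \eqref{eq:psi2b} coincide with the one-marginal Vallois identity $(\psi^{\bu}_2)'(x)=x\mu_2(x)/\mu_2([x,\infty[)$, and a computation analogous to the proof of Proposition~\ref{prop:vallois} --- now applied to the post-$\t^{\bu}_1$ exit problem --- yields density $\mu_2$ on these pieces. On a $\delta\mu$-interval $[x_{2i-1},x_{2i}]$, the total density of $|B_{\t^{\bu}_2}|$ is the sum of the $\mu_1$-mass carried through by $\t^{\bu}_2=\t^{\bu}_1$ and an excursion contribution from paths whose $L_{\t^{\bu}_1}$ lies in the adjacent continuation range but whose local time grew past $\psi^{\bu}_1(x_{2i-1})$ before exiting. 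The ODE \eqref{eq:psi2a} is precisely calibrated so that this excursion contribution equals $\delta\mu=\mu_2-\mu_1$ on $[x_{2i-1},x_{2i}]$, giving total density $\mu_2$. Combined with symmetry from Assumption~\ref{hyp:marginal2}, this yields $B_{\t^{\bu}_2}\sim\mu_2$. Uniform integrability of $B^{\t^{\bu}_2}$ then follows from $\mu_2\in L^1$ together with standard Vallois-type estimates for the bounded barrier, exactly as in Proposition~\ref{prop:vallois}.

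The main obstacle is the balance computation on the $\delta\mu$-intervals: checking that the excursion contribution reproduces exactly $\delta\mu$ requires combining the explicit density of $L_{\t^{\bu}_1}$ with the conditional law of $(L_{\t^{\bu}_2},|B_{\t^{\bu}_2}|)$ under the strong Markov property, and verifying that the ODE \eqref{eq:psi2a} is the unique way to do the bookkeeping. Assumption~\ref{hyp:embedding}\,(ii) and the peacock property of $\bu$ enter precisely here: the former guarantees that $\psi^{\bu}_2$ remains strictly increasing in the $\delta\mu$ phase, while the latter ensures that the cumulative ``excess mass'' to be placed on each $\delta\mu$-interval is non-negative, so that the balance is actually feasible.
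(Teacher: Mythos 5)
Your proposal follows essentially the same route as the paper: apply the strong Markov property at $\t^{\bu}_1$, compute the law of the post-$\t^{\bu}_1$ exit point via an explicit Vallois-type exit formula (the paper's Lemma~\ref{lemf}), and match the resulting density piecewise against $\mu_2$ using the recursive definition of $\psi_2^{\bu}$. So the plan is sound and this is the intended argument.

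There are, however, two places where the sketch is too quick. First, you assert in the opening paragraph that ``the integrands in \eqref{eq:psi20}--\eqref{eq:psi2b} are strictly positive on the relevant ranges,'' hence $\psi_2^{\bu}$ is well-defined and increasing. But on a $\delta\mu$-interval the integrand in \eqref{eq:psi2a} has denominator $\delta\mu([y,\infty[)$, and none of the stated assumptions (nor the peacock property alone, which you invoke in your last paragraph) directly forces $\delta\mu([y,\infty[) > 0$ there. In the paper this positivity (their equation \eqref{eq:deltamu}) is obtained \emph{as a byproduct} of the inductive identification $S_{2i+1}(x)=\delta\mu([x,\infty[)$, together with $S\ge 0$ by construction; it is logically downstream of the density computation, not an upfront consequence of the hypotheses. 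Second, your description of the $\delta\mu$-interval bookkeeping --- ``an excursion contribution from paths whose $L_{\t^{\bu}_1}$ lies in the adjacent continuation range'' --- is incomplete: a path landing in $[x_{2i-1},x_{2i}]$ at time $\t^{\bu}_2$ may have $L_{\t^{\bu}_1}$ in \emph{any} earlier continuation range, not only the adjacent one. The paper's density formula \eqref{eq:density} encodes this through the alternating sum $S_i(x)=\tfrac{1}{2}e^{-\g_2(\psi_2(x))}\sum_{j=0}^i(-1)^j e^{\g_2(l_j)-\g_1(l_j)}$ over all crossing points, and it is precisely this telescoping structure that makes the induction close. If you carry out the ``computation analogous to Proposition~\ref{prop:vallois}'' with the correct (non-Dirac) initial law of $(B_{\t^{\bu}_1},L_{\t^{\bu}_1})$, you will land on exactly this sum; without it the continuation-interval claim that \eqref{eq:psi2b} ``yields density $\mu_2$'' is not a direct analogue of the one-marginal case, since the factor $S_{2i}(x)=\mu_2([x,\infty[)$ is itself part of what must be proved inductively.
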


\no The proof of this result will be performed in the next section. Notice already that  
both points (i) and (ii) of Assumption~\ref{hyp:embedding} are to a certain extent necessary conditions to ensure the existence of an increasing function $\psi^{\bu}_2$ that solves the two-marginal SEP as above. 
See Remark~\ref{rem:embedding} below for more details. 
This suggests that one needs to relax the monotonicity assumption on $\psi_2^{\bu}$ in order to iterate the Vallois embedding for a larger class of marginals. 
However, our approach does not allow to compute the function $\psi_2^{\bu}$ in this general setting.

\subsection{Proof of Theorem~\ref{thm:vallois2}}
 
\no Let us start by a technical lemma which is a key step in the proofs of Theorem~\ref{thm:vallois2} and Theorem~\ref{th:markov} below.
  
\begin{lemma} \label{lemf}
 Let $\p_\pm$ be given as in Assumption~\ref{hyp:onemarginal-phi} and denote 
 \begin{equation*}
  \tau:=\inf\left\{ t >0: B_t\notin \left]\phi_-(L_t),\phi_+(L_t)\right[\right \}.
 \end{equation*}
 For any $f:\R\times\R_+\to\R$ bounded such that $l\mapsto f(\p_\pm(l),l)$ is of bounded variation, it holds for all $l\geq0$, $x\in ]\p_-(l),\p_+(l)[$,
\beaa
  \Eb_{x,l}[f(B_{\tau},L_{\tau})] &=& 
   \frac{f\left(\phi_+(l),l\right)-c(l)}{\phi_+(l)}x^+ -\frac{f\left(\phi_-(l),l\right)-c(l)}{\phi_-(l)}x^-+ c(l),
\eeaa
 where $\E_{x,l}$ denotes the conditional expectation operator 
 $\E\left[ \cdot |B_0=x,L_0=l\right]$ and
 \beaa
 c(l)&=& \frac{e^{\gamma(l)}}{2} \int_l^{\infty} \left(\frac{f(\p_+(m),m)}{\p_+(m)}-\frac{f(\p_-(m),m)}{\p_-(m)}\right)e^{-\gamma(m)} \,dm.
 \eeaa
 \end{lemma}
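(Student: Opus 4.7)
The plan is to interpret the right-hand side as the value at $(x,l)$ of a uniformly bounded martingale $w(B_{\cdot\wedge\tau},L_{\cdot\wedge\tau})$, where $w:\R\times\R_+\to\R$ is engineered so that $w\equiv f$ on the moving boundary $\{(\phi_\pm(l),l):l\ge 0\}$; optional stopping will then produce the claimed identity. Following the ansatz suggested by the right-hand side, I would set $w(x,l):= a(l)x^+ - b(l)x^- + c(l)$ with $a(l):=[f(\phi_+(l),l)-c(l)]/\phi_+(l)$ and $b(l):=[f(\phi_-(l),l)-c(l)]/\phi_-(l)$, the function $c$ being left free. This choice automatically enforces $w(\phi_\pm(l),l)=f(\phi_\pm(l),l)$ for every $l\ge 0$, independently of $c$.

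To determine $c$, I would apply It\^o-Tanaka's formula to $w(B_t,L_t)$. Expanding the products $a(L_t)B_t^+$ and $b(L_t)B_t^-$ by integration by parts, the cross-variations $\langle a(L),B^\pm\rangle$ and $\langle b(L),B^\pm\rangle$ vanish (finite variation against a continuous semimartingale), and the Stieltjes terms $B_t^\pm\, da(L_t)$, $B_t^\pm\, db(L_t)$ vanish because $dL_t$ is supported on $\{B_t=0\}$, where $B_t^\pm=0$. Using the Tanaka identities $dB_t^\pm=\pm\mathbf{1}_{\{\pm B_t>0\}}dB_t + \tfrac12 dL_t$ together with $dc(L_t)=c'(L_t)dL_t$, this yields
\[
 dw(B_t,L_t)=\bigl[a(L_t)\mathbf{1}_{\{B_t>0\}}+b(L_t)\mathbf{1}_{\{B_t<0\}}\bigr]dB_t+\bigl[\tfrac{a(L_t)-b(L_t)}{2}+c'(L_t)\bigr]dL_t.
\]
The drift vanishes precisely when $c$ solves the linear ODE $c'(l)-\gamma'(l)c(l)=-\tfrac12\bigl[f(\phi_+(l),l)/\phi_+(l)-f(\phi_-(l),l)/\phi_-(l)\bigr]$, recalling $\gamma'(l)=\tfrac12\bigl[1/\phi_+(l)-1/\phi_-(l)\bigr]$ from Assumption~\ref{hyp:onemarginal-phi}. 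Multiplying through by the integrating factor $e^{-\gamma(l)}$ and integrating from $l$ to $\infty$ under the natural terminal condition $e^{-\gamma(l)}c(l)\to 0$ (to be verified a posteriori using $\gamma(\infty)=\infty$) recovers exactly the formula for $c$ stated in the lemma.

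Finally, the estimate $\|c\|_\infty\le\|f\|_\infty$ follows from $|c(l)|\le\|f\|_\infty\, e^{\gamma(l)}\int_l^\infty e^{-\gamma(m)}\gamma'(m)\,dm=\|f\|_\infty$, using $1/\phi_+,-1/\phi_-\le 2\gamma'$ and $\gamma(\infty)=\infty$. For $t\le\tau$ one has $B_t\in[\phi_-(L_t),\phi_+(L_t)]$, giving $|a(L_t)B_t^+|\le|f(\phi_+(L_t),L_t)-c(L_t)|\le 2\|f\|_\infty$ and similarly for $b(L_t)B_t^-$, so that $w(B_{\cdot\wedge\tau},L_{\cdot\wedge\tau})$ is a uniformly bounded true martingale. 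Since $\tau<\infty$ a.s. (a standard consequence of $\gamma(\infty)=\infty$, as in Vallois~\cite{vallois-83}), optional stopping yields $w(x,l)=\E_{x,l}[f(B_\tau,L_\tau)]$, the boundary condition being used to identify $w(B_\tau,L_\tau)$ with $f(B_\tau,L_\tau)$. The main delicacy lies in rigorously justifying the It\^o-Tanaka expansion of $a(L_\cdot)B_\cdot^+$ when $a$ and $b$ are only of bounded variation in $l$; this is precisely why the lemma imposes that $l\mapsto f(\phi_\pm(l),l)$ be of bounded variation, a condition which via the identities $a\phi_+=f(\phi_+,\cdot)-c$ and $b\phi_-=f(\phi_-,\cdot)-c$ transfers to $a$ and $b$ themselves and legitimizes the product rule above.
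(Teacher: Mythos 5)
Your proof is correct and follows essentially the same approach as the paper: construct the ansatz $w(x,l)=a(l)x^+-b(l)x^-+c(l)$, apply It\^o--Tanaka to show $w(B_{\cdot\wedge\tau},L_{\cdot\wedge\tau})$ is a local martingale once $c$ solves the linear ODE equivalent to the paper's cancellation identity, observe the stopped process is bounded via $\|c\|_\infty\le\|f\|_\infty$ and $|B^\pm_{t\wedge\tau}|\le|\phi_\pm(L_{t\wedge\tau})|$, and conclude by optional stopping with $M_\tau=f(B_\tau,L_\tau)$. Your added remarks --- deriving $c$ from the integrating factor with terminal condition $e^{-\gamma}c\to 0$ rather than verifying it directly, and explaining how the bounded-variation hypothesis on $l\mapsto f(\phi_\pm(l),l)$ legitimizes the product rule --- are welcome elaborations but do not change the route.
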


\begin{proof}
 Let $(M_t)_{t\geq 0}$ be the process given by
 \begin{equation*}
  M_t  =  \frac{f\left(\phi_+(L_t),L_t\right)-c(L_t)}{\phi_+(L_t)}B_t^+ -\frac{f\left(\phi_-(L_t),L_t\right)-c(L_t)}{\phi_-(L_t)}B_t^-+ c(L_t).
 \end{equation*}
 By applying It\^o-Tanaka's formula and using further
 \begin{equation*}
  c'(l) + \frac{1}{2}\left(\frac{f\left(\phi_+(l),l\right)-c(l)}{\phi_+(l)} - \frac{f\left(\phi_-(l),l\right)-c(l)}{\phi_-(l)}\right) = 0,
 \end{equation*}
 we deduce that
 \begin{multline*}
  M_t  =  M_0 + \int_0^t {\frac{f\left(\phi_+(L_s),L_s\right)-c(L_s)}{\phi_+(L_s)}\mathbf{1}_{\{B_s>0\}} \,dB_s}\\
  + \int_0^t {\frac{f\left(\phi_-(L_s),L_s\right)-c(L_s)}{\phi_-(L_s)}\mathbf{1}_{\{B_s\le 0\}} \,dB_s}.
 \end{multline*} 
 Hence, the process $M$ is a local martingale. Further, the stopped process $M^{\tau}$ is bounded since $\|c\|_{\infty}\le\|f\|_{\infty}$ and $|B^{\pm}_{\t\wedge t}|\le |\phi_{\pm}(L_{\t\wedge t})|$. It follows that 
 \begin{equation*}
  \E_{x,l}\left[M_\tau\right] = M_0 
   = \frac{f\left(\phi_+(l),l\right)-c(l)}{\phi_+(l)}x^+ -\frac{f\left(\phi_-(l),l\right)-c(l)}{\phi_-(l)}x^-+ c(l). 
 \end{equation*}
 To conclude, it remains to see that $M_\tau=f(B_\tau,L_\tau)$ by definition.\hfill\quad\qed
\end{proof}
 
 \no We are now in a position to complete the proof of Theorem~\ref{thm:vallois2}. For the sake of clarity, we omit the index $\bu$ in the notations and we split the proof in three steps.
 
 \no\textsl{First step.} We start by showing that the distribution of $B_{\tau_2}$ admits a density w.r.t. the Lebesgue measure. Let us assume here that the function $\psi_2$ is increasing and satisfies $\g_2(0+)=0$ and $\g_2(\infty)=\infty$ where $\g_2:=\int_0^{\cdot}{\frac{1}{\p_2(m)}\,dm}$. This will be proved in Step~3 below. Notice first that the distribution of $B_{\tau_2}$ is symmetric by construction. By the strong Markov property and Lemma~\ref{lemf}, if we take the function $\l(y)=\mathbf{1}_{\{y\leq x\}}$ for some $x\geq0$, it holds
 \begin{equation*}
  \E\left[\l\left(\left|B_{\tau_2}\right|\right)\right] = \E\left[f\left(B_{\tau_1},L_{\tau_1}\right)\right] = \int_0^{\infty} {\frac{f\left(\p_1(l),l\right)}{\p_1(l)} e^{-\g_1(l)} \,dl},
 \end{equation*}
 where
 \begin{gather*}
  f(y,l)  := 
  \begin{cases}
    \frac{\l(\p_2(l)) - c(l)}{\p_2(l)} |y| + c(l), & \text{if }  |y|<\p_2(l), \\
    \lambda(|y|), &  \text{otherwise},
  \end{cases}
  \\  
  c(l)  := e^{\gamma_2(l)} \int_l^{\infty} {\frac{\lambda(\p_2(m))}{\p_2(m)} e^{-\gamma_2(m)} dm}.
 \end{gather*} 
 By a straightforward calculation, we get
 \begin{gather*}
  c(l) = \left(1 - e^{\g_2(l) - \g_2(\psi_2(x))}\right) \mathbf{1}_{\{l\leq \psi_2(x)\}}, \\
   f\left(\p_1(l),l\right) =
   \begin{cases}
     \left(1 + e^{\g_2(l) - \g_2(\psi_2(x))} \left(\frac{\p_1(l)}{\p_2(l)} - 1\right)\right) \mathbf{1}_{\{l \leq \psi_2(x)\}},& \text{if } \p_1(l) < \p_2(l), \\
      \mathbf{1}_{\{l\leq \psi_1(x)\}}, & \text{otherwise.}
   \end{cases}
 \end{gather*}
 Hence, we obtain
 \begin{multline*}
  \P\left(\left|B_{\tau_2}\right|\leq x\right) 
  =  e^{-\g_2(\psi_2(x))} \int_{\{\p_1<\p_2\}} {\mathbf{1}_{\{l\leq\psi_2(x)\}} \,de^{\g_2(l)-\g_1(l)}} \\
  - \int_{\{\p_1<\p_2\}} {\mathbf{1}_{\{l\leq\psi_2(x)\}} \,de^{-\g_1(l)}} -  \int_{\{\p_1\geq\p_2\}} {\mathbf{1}_{\{l\leq\psi_1(x)\}} \,de^{-\g_1(l)}}.
 \end{multline*}
 We deduce by direct differentiation of the identity above that the distribution of $B_{\tau_2}$ admits a density $\nu$ w.r.t. the Lebesgue measure given by
 \begin{equation}\label{eq:density}
  \nu(x) = 
    \begin{cases}  
      \displaystyle\frac{\psi_2'(x)}{2x} \left(S(x) + e^{-\g_1(\psi_2(x))}\right)  & \text{if } \psi_1(x)>\psi_2(x), \\
      \displaystyle\frac{\psi_1'(x)}{2x}e^{-\g_1(\psi_1(x))} + \frac{\psi_2'(x)}{2x} S(x) & \text{otherwise},
     \end{cases}
 \end{equation}
 where
 \begin{equation*}
  S(x) := - e^{-\g_2(\psi_2(x))} \int_{\{\p_1<\p_2\}} {\mathbf{1}_{\{l\leq\psi_2(x)\}}\,d{e^{\g_2(l)-\g_1(l)}}}. 
 \end{equation*}

 \no\textsl{Second step.} Let us show now that $\nu$ coincides with $\mu_2$ when $\p_2$ is defined as~\eqref{eq:psi20}--\eqref{eq:psi2b}. Notice first that the relation~\eqref{eq:symmetriceq} yields that
 \begin{equation*}
  e^{-\g_1(\psi_1(x))} = e^{-\int_0^x{\frac{\psi_1'(y)}{y}\,dy}} = 2 \mu_1([x,\infty[),\quad \text{for all }x\in\R_+.
 \end{equation*}
 Using further the identities~\eqref{eq:psi2a} and~\eqref{eq:psi2b}, it follows that
 \begin{equation*} 
  \nu(x) = 
    \begin{cases}  
      \displaystyle\frac{\mu_2(x)}{\mu_2([x,\infty[)} S_{2i}(x) & \text{if } x\in ]x_{2i},x_{2i+1}], \\
      \displaystyle\mu_1(x) + \frac{\delta\mu(x)}{\delta\mu([x,\infty[)} S_{2i+1}(x) & \text{if } x\in ]x_{2i+1},x_{2i+2}].
     \end{cases}
 \end{equation*}
 where we denote $l_i:=\psi_1(x_i)=\psi_2(x_i)$ and
 \begin{equation*}
  S_i(x) := \frac{e^{-\gamma_2(\psi_2(x))}}{2} \sum_{j=0}^i {(-1)^j e^{\gamma_2(l_j) - \gamma_1(l_j)}}.
 \end{equation*} 
 To conclude, it remains to show that $S_{2i}(x) = \mu_2([x,\infty[)$ for all $x\in[x_{2i},x_{2i+1}]$ and $S_{2i+1}(x) = \delta\mu([x,\infty[)$ for all $x\in[x_{2i+1},x_{2i+2}]$. 
 As a by-product, this proves that
 \begin{equation}\label{eq:deltamu}
  \delta\mu([x,\infty[) > 0,\quad \text{for all } x\in[x_{2i+1},x_{2i+2}],
 \end{equation}
 and thus $\psi_2$ is well-defined. 
 For $i=0$, it follows from the relation~\eqref{eq:psi20} that
 \begin{equation*}
  S_0(x) = \frac{e^{-\gamma_2(\psi_2(x))}}{2} = \mu_2([x,\infty[), \quad \text{for all } x\in[0,x_1].
 \end{equation*}
 Assume that $S_{2i}(x) = \mu_2([x,\infty[)$ for all $x\in[x_{2i},x_{2i+1}]$. It results from the relation~\eqref{eq:psi2a} that 
 \begin{equation*}
  e^{-\gamma_2(\psi_2(x)) + \gamma_2(l_{2i+1})} = \frac{\delta\mu([x,\infty[)}{\delta\mu([x_{2i+1},\infty[)}, \quad \text{for all }x\in [x_{2i+1},x_{2i+2}].
 \end{equation*}
 Hence, we deduce that for all $x\in [x_{2i+1},x_{2i+2}]$,
 \begin{equation*}
  S_{2i+1}(x) 
     = e^{-\gamma_2(\psi_2(x)) + \gamma_2(l_{2i+1})} \left(S_{2i}\left(x_{2i+1}\right) - \frac{e^{-\gamma_1(l_{2i+1})}}{2}\right) = \delta\mu([x,\infty[).
 \end{equation*}
 Further, it results from the relation~\eqref{eq:psi2b} that 
 \begin{equation*}
  e^{-\gamma_2(\psi_2(x)) + \gamma_2(l_{2i+2})} = \frac{\mu_2([x,\infty[)}{\mu_2([x_{2i+2},\infty[)}, \quad \text{for all }x\in [x_{2i+2},x_{2i+3}].
 \end{equation*}
 Hence, we deduce that for all $x\in [x_{2i+2},x_{2i+3}]$,
 \begin{equation*}
  S_{2i+2}(x) 
     = e^{-\gamma_2(\psi_2(x)) + \gamma_2(l_{2i+2})} \left(S_{2i+1}\left(x_{2i+2}\right) + \frac{e^{-\gamma_1(l_{2i+2})}}{2}\right) = \mu_2([x,\infty[).
 \end{equation*}

\no \textsl{Third step.} We are now in a position to complete the proof. The relation~\eqref{eq:psi2b} clearly imposes that $\psi_2$ is increasing on every interval such that $\psi_1>\psi_2$. Under Assumption~\ref{hyp:embedding} (ii), the relation~\eqref{eq:psi2a} together with~\eqref{eq:deltamu} ensures that $\psi_2$ is increasing on every interval such that $\psi_1<\psi_2$.
In addition, it follows immediately from~\eqref{eq:psi20} that $\g_2(0+)=0$. Further, in view of Assumption~\ref{hyp:marginal2} (iii), one easily checks by a straightforward calculation that $\g_2(\infty)=\infty$. 
It remains to prove that  the stopped process $B^{\t_2}$ is uniformly integrable. Since $|B_{t\wedge\t_2}|\leq |B_{\tau_2}|$ for all $t\geq 0$, the uniform integrability follows immediately from the assumption that $\mu_2$ admits a finite first moment. \hfill\quad\qed

\begin{remark}\label{rem:embedding}
 The first step of the proof does not rely on the specific form of the increasing function $\psi_2$. For this reason, the relation~\eqref{eq:density} sheds new light on Assumption~\ref{hyp:embedding}.
 For instance, if $\psi_1\leq\psi_2$ near zero, then we see that $\nu=\mu_1$ near zero. Else $\psi_2=\int_{0}^{\cdot}{\frac{y\nu(y)}{\nu([y,\infty[)}\,dy}$ near zero. Thus, one cannot expect to solve the two-marginal SEP as above, if $\delta\mu\geq 0$ and $\delta\mu\not\equiv 0$ on a neighborhood of zero. In addition, since $S\geq 0$ by construction, we deduce that $\psi_2$ is increasing if and only if $\nu-\mu_1>0$ whenever $\psi_1<\psi_2$.
\end{remark}

\subsection{A Numerical Example}\label{sec:numerical}

\no As an example, we consider the pair of symmetric densities $(\mu_1,\mu_2)$ given by
\begin{gather*}
 \mu_1(x) := e^{-2 x}, \quad \text{for all } x\geq 0,\\
 \mu_2(x) :=
  \begin{cases}
   \frac{5}{2} x^{3} e^{-\frac{5x^{4}}{4}}, & \text{if } 0\leq x\leq 1,  \\
   e^{-2 x } + \delta \mu(1)x^{\alpha-2} e^{-\frac{\alpha (x^{\alpha -1}-1)}{\alpha-1}}, & \text{if } x > 1,
  \end{cases}
\end{gather*}
where $\alpha$ is a parameter satisfying $\delta\mu(1)= \alpha \delta \mu([1,\infty[)$. 
The corresponding embedding maps $\psi^{\bu}_1$ and $\psi^{\bu}_2$ are given by
\beaa
 \psi^{\bu}_1(x):= x^2 &~~\mbox{and}~~& \psi^{\bu}_2(x):= 
  \begin{cases}
    x^5, & \text{if } 0\leq x\leq 1,  \\
    x^\alpha, & \text{if } x > 1.
  \end{cases}
\eeaa
All the assumptions in Theorem \ref{thm:vallois2} are satisfied as can be seen in Figure~\ref{figSEP}.

\begin{figure}[htp]
 \centering
  \includegraphics[width=0.75\linewidth]{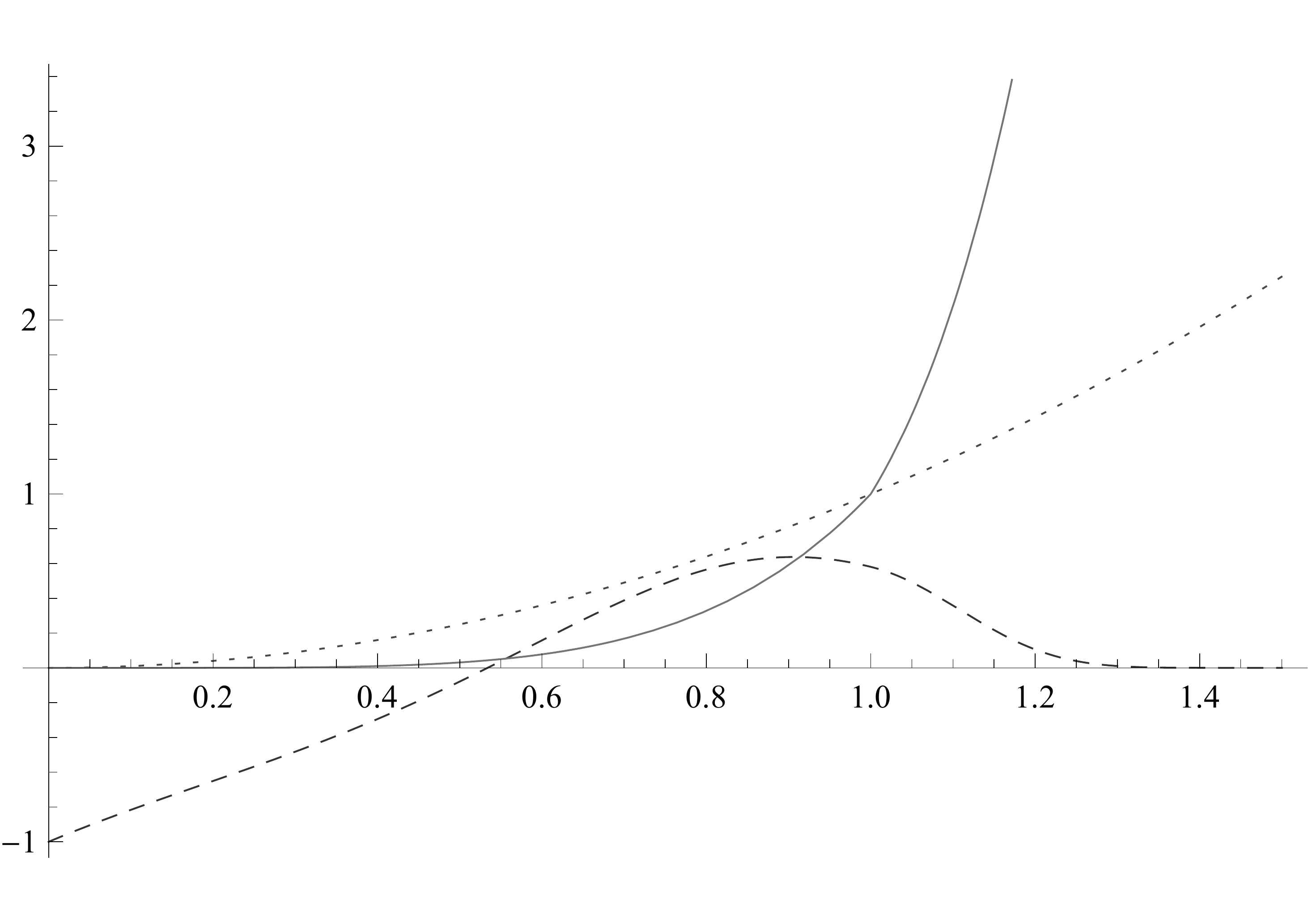} 
  \caption{Embedding functions $\psi_1$ (dotted line), $\psi_2$ (solid line) and the difference of densities $\delta \mu$ (dashed line).} \label{figSEP}
\end{figure}

\no In Figure \ref{fig2}, we provide a comparison of the analytical cumulative distributions of $B_{\tau^{\bu}_1}$ and $B_{\tau^{\bu}_2}$ with their Monte-Carlo estimations using $2^{17}$ paths. We find a very good match except on a neighborhood of zero. Note that the simulation of $\tau^{\bu}_1$ and $\tau^{\bu}_2$ are quite difficult as we need to simulate the local time of a Brownian motion, which is a highly irregular object. We have chosen to simulate the local time $L_{k \Delta t}$ at a time step $k \Delta t$ using
\beaa 
 L_{k \Delta t}-L_{(k-1) \Delta t}&=&\frac{\Delta t}{2\epsilon} \mathbf{1}_{\{B_{(k-1) \Delta t} \in [-\epsilon,\epsilon]\}} 
\eeaa 
with $\epsilon=0.04$ and $\Delta t=1/4000$. Since the derivatives of $\p^{\bu}_1$ and $\p^{\bu}_2$ are infinite at zero, the accuracy of our Monte Carlo estimations near zero depends strongly on the discretization of the local time, which explains the small mismatch in Figure \ref{fig2}. 

\begin{figure}[htp]
 \centering
  \includegraphics[width=0.75\linewidth]{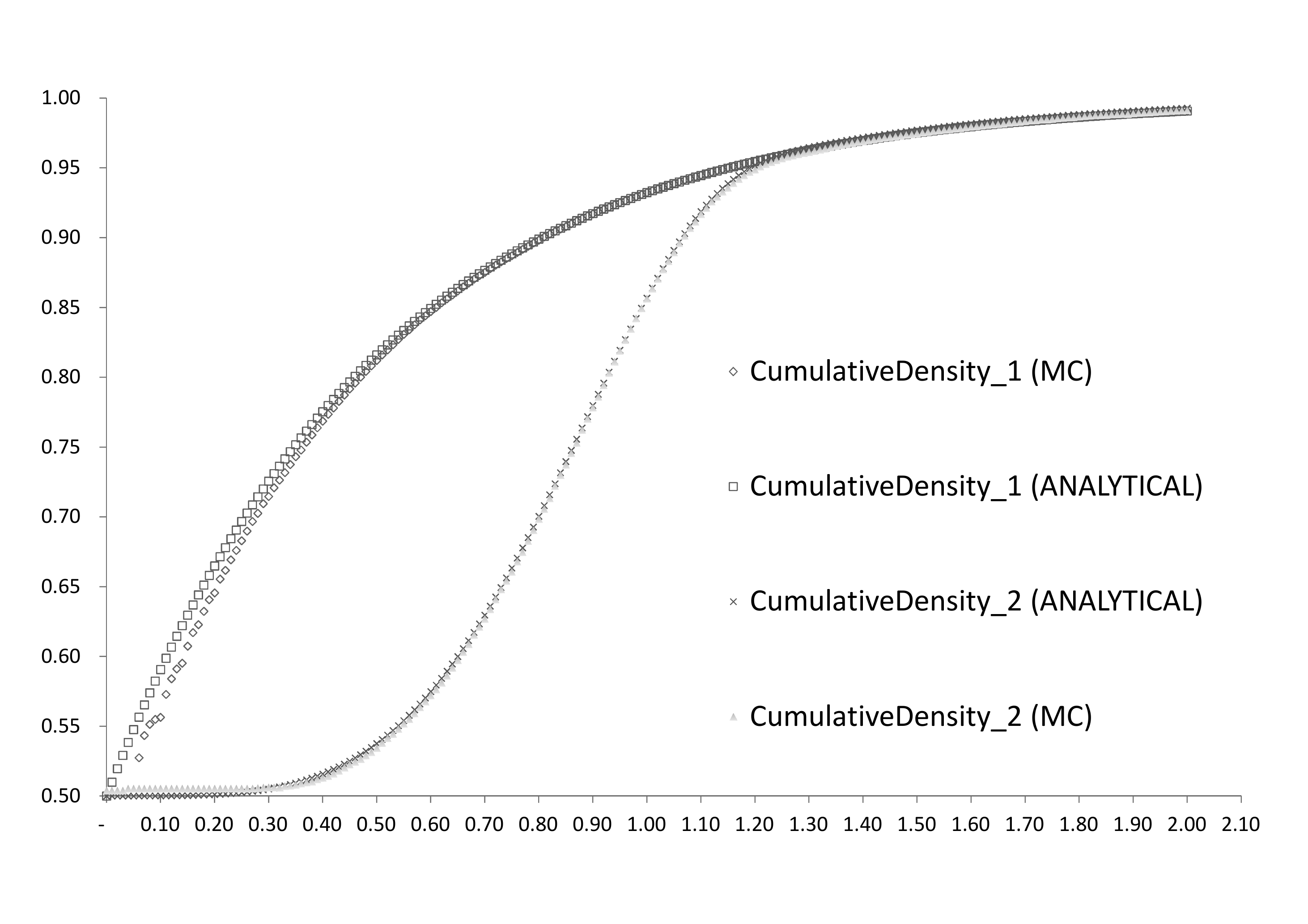}
 \caption{Analytic cumulative distributions of $\mu_1$ ($\square$) and $\mu_2$ ($\times$), and their Monte Carlo approximations for $\mu_1$ ($\diamond$) and $\mu_2$ ($\triangle$).}
 \label{fig2}
\end{figure}

\section{A Remarkable Markov Martingale with Full Marginals}\label{sec:multi}

\no In this section, we exhibit a remarkable Markov martingale under the assumption that all the marginals of the process are known. In particular, it provides a new example of fake Brownian motion.
 
 \subsection{Infinitesimal Generator}
 
We consider that all the marginals $(\mu_t)_{0\leq t\leq T}$ of the process are known. For the sake of simplicity, we assume that $\mu_t$ is symmetric and equivalent to the Lebesgue measure for every $t\in[0,T]$. Denote by $\t_t$ (resp. $\p_t$) the stopping time (resp. the map) given by Vallois~\cite{vallois-83} that embed the distribution $\mu_t$. 

\begin{assumption}\label{cvallois} 
 For all $0\le s\le t\le T$, $\t_s\le \t_t$, or equivalently, $\p_s\le~\p_t$.
\end{assumption}

\no The next result gives the generator of the Markov process $(B_{\t_t})_{0\le t\le T}$. It is analogous to the study of Madan and Yor~\cite{madan-yor-02} with the Az\'ema-Yor solution to the SEP. In particular, the process $(B_{\t_t})_{0\le t\le T}$ is a pure jump process, which corresponds to an example of local L\'evy model introduced in Carr et al. \cite{carr-al-04}.

\begin{theorem}\label{th:markov} 
  Under Assumption \ref{cvallois}, $(B_{\t_t})_{0\le t\le T}$ is an inhomogeneous Markov martingale whose generator is given by
  \begin{multline*}
    \Lc_t f(x) =  - \frac{\partial_t \ps_t(|x|)}{\partial_x\ps_t(|x|)} \bigg( \mathrm{sgn}(x)f'(x) \\
     - \frac{e^{\g_t(\ps_t(|x|))}}{2|x|} \int_{|x|}^{\infty} {\Big( { f(y) +f(-y)} - 2f(x) \Big) d e^{-\g_t(\ps_t(y))}}\bigg),
  \end{multline*} 
  where $\gamma_t(l):=\int_0^l{\frac{1}{\phi_t(m)}\, dm}$, $l\geq0$.
   \end{theorem}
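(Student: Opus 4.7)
My plan is to establish the Markov and martingale structure of $(B_{\t_t})_{0\le t\le T}$ first, and then to compute the generator by differentiating in time the transition kernel provided by Lemma~\ref{lemf}. The Markov property rests on the observation that $|B_{\t_t}| = \phi_t(L_{\t_t})$ by construction of Vallois' embedding, hence $L_{\t_t} = \psi_t(|B_{\t_t}|)$ is a deterministic function of $B_{\t_t}$. Under Assumption~\ref{cvallois}, for $s\le t$ one has the representation $\t_t = \inf\{u\ge\t_s : |B_u|\ge \phi_t(L_u)\}$, so the strong Markov property of $B$ at $\t_s$ shows that the conditional law of $B_{\t_t}$ given $\Fc_{\t_s}$ depends only on $(B_{\t_s}, L_{\t_s})$, hence only on $B_{\t_s}$. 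The martingale property follows from optional sampling applied to the uniformly integrable martingale $(B_{u\wedge\t_t})_{u\ge 0}$ at the stopping time $\t_s\le \t_t$.

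For the generator, I would fix $x\in\R$, $t\in[0,T)$, $h>0$, and set $l := \psi_t(|x|)$, so that $|x| = \phi_t(l) \le \phi_{t+h}(l)$. By the strong Markov property and the symmetry of $\mu_{t+h}$, applying Lemma~\ref{lemf} with $\phi_\pm = \pm\phi_{t+h}$ to the Brownian increments after $\t_t$ yields
\begin{equation*}
 \E[f(B_{\t_{t+h}})\mid B_{\t_t}=x] = \frac{f(\phi_{t+h}(l))-c_h}{\phi_{t+h}(l)}\,x^+ + \frac{f(-\phi_{t+h}(l))-c_h}{\phi_{t+h}(l)}\,x^- + c_h,
\end{equation*}
with $c_h := \tfrac{1}{2} e^{\g_{t+h}(l)}\int_l^\infty \bigl(f(\phi_{t+h}(m)) + f(-\phi_{t+h}(m))\bigr)e^{-\g_{t+h}(m)}/\phi_{t+h}(m)\,dm$. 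At $h=0$ the right-hand side collapses to $f(x)$, so $\Lc_t f(x)$ is the one-sided derivative in $h$ at $h=0^+$ of this expression.

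The algebraic heart of the computation is the identity $\partial_h\phi_{t+h}(l)|_{h=0} = -\partial_t\psi_t(|x|)/\partial_x\psi_t(|x|) =: \lambda$, obtained by differentiating $\psi_u(\phi_u(l))\equiv l$ in $u$. Differentiating the transition formula and using $\phi_t(l)=|x|$ to cancel the apparent singularity in the first two terms produces $\Lc_t f(x) = \lambda\,\mathrm{sgn}(x)\,f'(x) + \lambda\,(c_0-f(x))/|x|$. A change of variable $y=\phi_t(m)$ in $c_0$, combined with $d e^{-\g_t(\psi_t(y))} = -\psi_t'(y)y^{-1}e^{-\g_t(\psi_t(y))}\,dy$ and the trivial identity $-e^{\g_t(\psi_t(|x|))}\int_{|x|}^\infty d e^{-\g_t(\psi_t(y))} = 1$, recasts $c_0 - f(x)$ as $-\tfrac{1}{2} e^{\g_t(\psi_t(|x|))}\int_{|x|}^\infty\bigl(f(y)+f(-y)-2f(x)\bigr)\,d e^{-\g_t(\psi_t(y))}$, matching the announced formula after substitution.

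The main obstacle is this last differentiation step: one must justify exchanging $\partial_h$ with the integral defining $c_h$ and verify that $t\mapsto\phi_t(l)$ is (one-sidedly) differentiable with the expected derivative. Both points require implicit regularity of the family $(\mu_t)_t$ — typically $C^1$-dependence in $t$ of $\psi_t$ — together with appropriate integrability of $f$ to invoke dominated convergence; the decay of $e^{-\g_{t+h}(m)}$ at infinity should suffice provided $f$ has at most polynomial growth. The remaining ingredients — the Markov property, the martingale property via optional sampling, and the algebraic simplification of the kernel — are relatively routine once Lemma~\ref{lemf} is in hand.
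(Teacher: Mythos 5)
Your proposal is correct and follows essentially the same route as the paper's proof: establish the Markov and martingale structure, apply Lemma~\ref{lemf} via the strong Markov property at $\t_t$ to get the transition kernel $\E[f(B_{\t_s})\mid B_{\t_t}=x]$ as an affine function of $x^\pm$, differentiate in the forward time variable at $s=t$, and use the implicit relation $\phi_t\circ\psi_t=\mathrm{id}$ to express $\partial_s\phi_s(l)\vert_{s=t}$ in terms of $\partial_t\psi_t/\partial_x\psi_t$. The cancellation of the $\partial_t c$ contributions and the change of variable $y=\phi_t(m)$ leading to the $de^{-\g_t\circ\psi_t}$ integrand are exactly as in the paper. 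The one place where you add value beyond the paper's terse "the process is clearly an inhomogeneous Markov martingale" is the explicit observation that $L_{\t_t}=\psi_t(|B_{\t_t}|)$ is a deterministic function of $B_{\t_t}$, combined with $\t_t=\inf\{u\ge\t_s:|B_u|\ge\phi_t(L_u)\}$ under Assumption~\ref{cvallois}, which makes the Markov property transparent; the paper instead defers to Madan and Yor. You also correctly flag the regularity needed (one-sided $t$-differentiability of $\psi_t$, dominated convergence to pass $\partial_h$ inside the $c_h$ integral), which the paper leaves implicit.
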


 \begin{proof}
 The process $(B_{\t_t})_{0\le t\le T}$ is clearly an inhomogeneous Markov martingale, see Madan and Yor~\cite{madan-yor-02} for more details.
It remains to compute the generator. For each $0\le t<s\le T$, we denote 
\begin{eqnarray*}
\Eb\big[f(B_{\t_s})\,\big|\,B_{\t_t}=x\big] = \Eb\big[f(B_{\t_s})\,\big|\,B_{\t_t}=x, L_{\t_t}=\ps_t(|x|)\big]&=:&v\big(x,\ps_t(|x|)\big).
\end{eqnarray*}
Then it follows from Lemma~\ref{lemf} that
\beaa
v\big(x,\ps_t(|x|)\big)&=&a_s\circ\ps_t(|x|)\,x^+ + b_s\circ \ps_t(|x|)\,x^- + c_s\circ\ps_t(|x|)
\eeaa
with
\beaa
a_s(l)&:=&\frac{f\big(\p_s(l)\big)-c_s(l)}{\p_s(l)}, \\
b_s(l)&:=&\frac{f\big(-\p_s(l)\big) - c_s(l)}{\p_s(l)}, \\
c_s(l)&:=&\frac{e^{\g_s(l)}}{2}\int_l^{\infty}\frac{f\big(\p_s(m)\big)+f\big(-\p_s(m)\big)}{\p_s(m)} e^{-\g_s(m)} \,dm.
\eeaa
By definition, the generator is given by, for all $x\in\R$,
\begin{eqnarray*}
\Lc_t f(x) &=& \partial_t a_t\circ \ps_t(|x|)\,x^+ + \partial_t b_t\circ\ps_t(|x|)\, x^- + \partial_tc_t\circ\ps_t(|x|).
\end{eqnarray*}
Differentiating the relation $\p_t\circ\psi_t(|x|)=|x|$ w.r.t. $t$, we obtain
\begin{equation*}
 \partial_t\p_t\circ\psi_t(|x|) + \frac{\partial_t\ps_t(|x|)}{\partial_x {\ps_t}(|x|)} = 0.
\end{equation*}
Using the formula above, a straightforward calculation yields that for all $x\ge 0$,
\begin{eqnarray*}
x\, \partial_ta_t\circ\ps_t(x) & = &  \frac{\partial_t\ps_t(x)}{\partial_x {\ps_t}(x)} \left(\frac{f(x) - c_t\circ\ps_t(x)}{x} - f'(x)\right) -\partial_tc_t\circ\ps_t(x).
\end{eqnarray*}
Similarly, it holds for all $x<0$,
\begin{multline*}
 -x\,\partial_tb_t\circ\ps_t(-x) \\
 = \frac{\partial_t\ps_t(-x)}{\partial_x {\ps_t}(-x)} \left(\frac{f(x) - c_t\circ\ps_t(-x)}{-x} + f'(x)\right) - \partial_tc_t\circ\ps_t(-x).
\end{multline*}
Hence, we obtain
\begin{eqnarray*}
\Lc_t f(x) &=& \frac{\partial_t \ps_t(|x|)}{\partial_x{\ps_t}(|x|)} \left( \frac{f(x)-c_t\circ\ps_t(|x|)}{|x|} - \mathrm{sgn}(x) f'(x) \right). 
\end{eqnarray*}
The desired result follows by using further
\beaa
c_t\circ\ps_t(|x|) &=& \frac{e^{\g_t\circ\ps_t(|x|)}}{2}\int_{|x|}^{\infty}{\frac{f(y)+f(-y)}{y} \partial_x {\ps_t}(y) e^{-\g_t\circ\ps_t(y)}}\,dy \\
&=&-\frac{e^{\g_t\circ\ps_t(|x|)}}{2}\int_{|x|}^{\infty}\big(f(y)+f(-y)\big)de^{-\g_t\circ\ps_t(y)}. \hfill \quad    \qed
\eeaa
\end{proof}

 \subsection{Fake Brownian Motion}

\no As an application, we provide a new example of fake Brownian motion. If $(\m_t)_{0\le t\le T}$ is a continuous Gaussian peacock, i.e.,
\begin{equation*}
 \mu_t(x)=\frac{1}{\sqrt{2\pi t}}e^{-\frac{x^2}{2t}}, \quad \text{for all } t\in [0,T]\text{ and }x\in\R,
\end{equation*}
it satisfies Assumption~\ref{cvallois} in view of Lemma~\ref{lem:monotone} below.  
Then the process $(B_{\t_t})_{0\le t\le T}$ is a fake Brownian motion, i.e., a Markov martingale with the same marginal distributions as a Brownian motion that is not a Brownian motion. 

\begin{lemma}\label{lem:monotone}
 If $(\m_t)_{0\le t\le T}$ is a continuous Gaussian peacock, then the map $t\mapsto \psi_t(x)$ is decreasing for all $x>0$.
\end{lemma}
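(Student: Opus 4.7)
The plan is to exploit the scaling structure of the Gaussian family so as to reduce the monotonicity in $t$ to a one-variable monotonicity property of the normal hazard rate.

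First, I would specialize the explicit Vallois formula~\eqref{eq:symmetriceq} for a symmetric marginal to the case $\mu_t(y)=\frac{1}{\sqrt{2\pi t}}e^{-y^2/(2t)}$, so that
\begin{equation*}
\psi_t(x)=\int_0^x \frac{y\,\mu_t(y)}{\mu_t([y,\infty[)}\,dy.
\end{equation*}
Performing the change of variable $u=y/\sqrt{t}$ gives $\mu_t(y)\,dy=\varphi(u)\,du$ and $\mu_t([y,\infty[)=\overline{\Phi}(u)$, where $\varphi$ and $\overline{\Phi}=1-\Phi$ denote respectively the standard Gaussian density and its tail. Therefore $\psi_t(x)=\sqrt{t}\,G\!\left(x/\sqrt{t}\right)$ with $G(w):=\int_0^w g(u)\,du$ and $g(u):=u\,\varphi(u)/\overline{\Phi}(u)$. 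This representation isolates the $t$-dependence into a scaling factor and a rescaled argument.

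Next, writing $s=\sqrt{t}$ and $w=x/s$, a direct differentiation yields
\begin{equation*}
\partial_s \psi_t(x) = G(w) - w\,g(w)=:h(w),
\end{equation*}
so since $s\mapsto t$ is increasing it suffices to prove $h(w)<0$ for $w>0$. As $h(0)=0$ and $h'(w)=-w\,g'(w)$, I only need to verify that $g$ is strictly increasing on $]0,\infty[$.

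The function $g(w)=w\,R(w)$ where $R(w)=\varphi(w)/\overline{\Phi}(w)$ is the Gaussian hazard rate. The factor $w$ is obviously increasing, and $R$ is strictly increasing on $\R$: differentiating and using $\varphi'(w)=-w\varphi(w)$ gives $R'(w)=\varphi(w)\bigl[\varphi(w)-w\,\overline{\Phi}(w)\bigr]/\overline{\Phi}(w)^2$, and the classical Mills ratio inequality $\overline{\Phi}(w)<\varphi(w)/w$ (for $w>0$), obtained from $\int_w^\infty \varphi(v)\,dv\leq w^{-1}\int_w^\infty v\varphi(v)\,dv=\varphi(w)/w$, shows $R'(w)>0$; the case $w\leq0$ is trivial since both factors are nonnegative. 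Hence $g'>0$ on $]0,\infty[$, which yields $h'<0$, hence $h<0$, and finally $\partial_t\psi_t(x)<0$ for every $x>0$. I do not anticipate a genuine obstacle in this argument; the only delicate point is keeping track of signs in the scaling reduction and quoting the Mills ratio bound in the right direction.
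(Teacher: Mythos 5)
Your proof is correct, and it follows a genuinely different route from the paper's even though both ultimately rest on the same piece of structure. The paper first applies integration by parts to rewrite $\psi_t(x)=\int_0^x \log\!\left(R_t(y)/R_t(x)\right)dy$ with $R_t(x):=\mu_t([x,\infty[)$, then uses the scaling $R_t(x)=R_1(x/\sqrt{t})$ to reduce the claim to showing that $u\mapsto R_1(uy)/R_1(ux)$ is increasing for $0<y<x$; it verifies this by differentiating, merging the two tail integrals into a single integral via a change of variable, and comparing the integrands pointwise. You scale first, to get $\psi_t(x)=\sqrt{t}\,G(x/\sqrt{t})$, then differentiate in $s=\sqrt{t}$ and reduce cleanly to the strict monotonicity of $g(w)=w\,\varphi(w)/\overline{\Phi}(w)$ on $]0,\infty[$, which you establish via the classical Mills ratio bound. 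These are the same underlying inequality --- the paper's key step is precisely $x\varphi(tx)R_1(ty)\geq y\varphi(ty)R_1(tx)$ for $0<y<x$, i.e.\ $g(tx)\geq g(ty)$ in your notation --- but you make this explicit by recognizing $g$ as $w$ times the Gaussian hazard rate, a well-catalogued monotone quantity, whereas the paper verifies it through an ad hoc algebraic rearrangement that conceals the structure. Your route is arguably more illuminating and modular; the paper's avoids invoking the Mills ratio inequality as a named fact (it is essentially re-derived inside the pointwise comparison). One small stylistic note: the remark that ``the case $w\le 0$ is trivial'' is harmless but unnecessary, since only the behavior of $g$ on $]0,\infty[$ enters your argument.
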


\begin{proof}
 For the sake of clarity, we denote $R_t(x):=\mu_t([x,\infty[)$ in this proof. By integration by parts, it holds for all $x\geq 0$,
\begin{equation*}
 \ps_t(x) = \int_0^x {\frac{y\mu_t(y)}{R_t(y)} dy} = \int_0^x {\log{\left(\frac{R_t\left(y\right)}{R_t\left(x\right)}\right)} dy}.
\end{equation*}
Further, by change of variable, we have $R_t(x)= R_1(\frac{x}{\sqrt{t}})$. To conclude, it is clearly enough to prove that the map $t\mapsto \frac{R_1(ty)}{R_1(tx)}$ is increasing for all $x>0$ and $0<y<x$. By direct differentiation, we see that its derivative has the same sign as
\begin{multline*}
 xe^{-\frac{t^2x^2}{2}}\int_{ty}^{\infty}e^{-\frac{z^2}{2}}dz - ye^{-\frac{t^2y^2}{2}}\int_{tx}^{\infty}e^{-\frac{z^2}{2}}dz \\
 = \int_{txy}^{\infty}\Big(e^{-\frac{1}{2}\big(\frac{z^2}{x^2}+t^2x^2\big)}- e^{-\frac{1}{2}\big(\frac{z^2}{y^2}+t^2y^2\big)}\Big)dz.
\end{multline*}
It remains to observe that the quantity above is positive since 
\begin{equation*}
 \frac{z^2}{x^2}+t^2x^2 < \frac{z^2}{y^2}+t^2y^2,\quad \text{for all }z> txy,\, 0< y< x \text{ and }x> 0. \hfill\quad \qed
\end{equation*}
\end{proof}

\section{Conclusions}

This paper makes contribution on several topics related to the Vallois embedding and its applications. In particular, we provide a complete study of the robust hedging problem for options on local time in the one-marginal case by using the stochastic control approach.  In addition, we derive a new solution to the two-marginal Skorokhod embedding when the marginal distributions are symmetric. Under appropriate assumptions, we compute the corresponding monotone embedding functions. 
A natural direction for future research is to relax the monotonicity assumption in order to embed more marginals.
 Besides it would be of interest to iterate the stochastic control approach to deal with the multi-marginal robust hedging problem in the spirit of Henry-Labord\`ere et al.~\cite{henry-al-15}. However the problem is less tractable than one might hope and we have not yet been able to provide a complete solution.

\begin{acknowledgements}
	Julien Claisse is grateful to the financial support of ERC Advanced Grant 321111 ROFIRM.
\end{acknowledgements}

\bibliographystyle{ieeetr}
\bibliography{Vallois}{}

\end{document}